\newtheorem{theorem}{Theorem}[section]
\newtheorem{lemma}{Lemma}[section]
\newtheorem{proposition}{Proposition}[section]
\theoremstyle{definition}
\newtheorem{definition}{Definition}[section]
\newtheorem{example}{Example}[section]
\numberwithin{equation}{section}
\newtheorem{condition}[theorem]{Condition}
\newtheorem{algorithm}[theorem]{Algorithm}
\newtheorem{problem}[theorem]{Problem}
\begin{document}
\setcounter{page}{1}

\vspace*{1.0cm}
\title[Data-compatibility of algorithms]
{Data-compatibility of algorithms}
\author[Y. Censor, M. Zaknoon, A. J. Zaslavski]{ Yair Censor$^{1,*}$, Maroun Zaknoon$^2$, Alexander J. Zaslavski$^3$}
\maketitle
\vspace*{-0.6cm}

\begin{center}
{\footnotesize {\it

$^1$Department of Mathematics, University of Haifa\\Mt. Carmel, Haifa 3498838, Israel\\
$^2$Department of Mathematics, The Arab Academic College for Education\\22 HaHashmal Street, Haifa 32623, Israel\\
$^3$Department of Mathematics, The Technion -- Israel Institute of Technology\\Technion City, Haifa 3200003, Israel

}}\end{center}

\vskip 4mm {\small \noindent {\bf Abstract.}
The data-compatibility approach to constrained optimization, proposed here,
strives to a point that is \textquotedblleft close enough\textquotedblright%
\ to the solution set and whose target function value is \textquotedblleft
close enough\textquotedblright\ to the constrained minimum value. These
notions can replace analysis of asymptotic convergence to a solution point of
infinite sequences generated by specific algorithms. We consider a problem of
minimizing a convex function over the intersection of the fixed point sets of
nonexpansive mappings and demostrate the data-compatibility of the Hybrid
Subgradient Method (HSM). A string-averaging HSM is obtained as a by-product
and relevance to the minimization over disjoint hard and soft constraints sets
is discussed.

\vskip 1mm \noindent {\bf Keywords.}
Data-compatiblity; constrained minimization;
feasibility-seeking; hybrid subgradient method; string-averaging; common fixed
points; proximity function; nonexpansive operators; hard and soft
constraints. }

\renewcommand{\thefootnote}{}
\footnotetext{ $^*$Corresponding author.
\par
E-mail addresses: yair@math.haifa.ac.il (Y. Censor), zaknoon@arabcol.ac.il (M. Zaknoon),\\ ajzasl@techunix.technion.ac.il (A. J. Zaslavski).
\par
Received May 17, 2020; Accepted October 19, 2020. }

\section{Introduction}

The data of a constrained minimization problem $\min\{f(x)\mid$ $x\in C\}$
consists of a target function $f$ and a constraints set $C.$ For this problem
to be meaningful, $C$ needs to be nonempty, and for asymptotic convergence
analysis of an algorithm for solving the problem one commonly needs that the
solution set of the problem be nonempty, i.e., that there exists at least one
point, say $x^{\ast},$ in $C$ with the property that $f(x^{\ast})\leq f(x)$
for all $x\in C.$

In real-world practical situations these nonemptiness assumptions cannot
always be guaranteed or verified. To cope with this we define the notion of
\textit{data-compatibility} in a Hilbert space. Such data-compatibility is a
finite, not an asymptotic, notion. Even when the sets $C$ and the solution set
of the constrained minimization problem are nonempty, striving for
data-compatibility is a worthwhile aim because it can be \textquotedblleft
reached\textquotedblright,\ contrary to asymptotic limit points.

Data-compatibility of a point $x$ means that it simultaneously, (i) is
\textquotedblleft close enough\textquotedblright\ to the set of minimizers of
$f$ over the constraints $C,$ and (ii) has a function value $f(x)$
\textquotedblleft close enough\textquotedblright\ to the minimum of $f$ over
that set of constraints. Once we precisely define these notions the question
arises whether or not it is possible to guarantee that, under certain
conditions, compatibility with a data pair $(C,f)$ can be reached by an
iterative process designed to solve a constrained optimization problem?

The advantage of these data-compatibility\ notions is that they can cater
better to practical situations. On the theoretical side, we propose that
instead of proving asymptotic convergence of iterative processes and
afterwards studying the effects of various stopping rules, it is possible to
directly formulate data-compatibility and provably guarantee that it can be
reached by an iterative process.

We demonstrate our principle and approach in a specific scenario. The problem
formulation and the algorithm that we use in our demonstration serve only as
vehicles to present our data-compatibility approach which we believe is novel.
To do so we study the behavior of the specific iterative process for convex
minimization over the intersection of the fixed point sets of nonexpansive
operators, called the Hybrid Subgradient Method (HSM)\footnote{The term HSM is
in analogy with the established term, coined by Yamada \cite{Yamada2001a}, of
the Hybrid Steepest Descent Method (HSDM). The structural similarity of the
HSM with the HSDM is that the former uses subgradient steps instead of the
steepest descent steps used by the latter.}. In contrast with many existing
works, rather than investigating asymptotic convergence of the generated
sequences we specify conditions under which the iterative HSM process
generates solutions that are compatible with the data pair $(\mathrm{Fix}%
\left(  T\right)  ,f).$

Minimization over the intersection of the fixed point sets of nonexpansive
operators has been treated extensively in the literature, of which we
reference a few works below. But in all these earlier works the asymptotic
convergence of algorithms, under various sets of conditions, is the central
theme, not data-compatibility.

As an important special case of the general algorithmic formulation we discuss
a string-averaging algorithmic scheme. The string-averaging algorithmic notion
has a quite general structure in itself. Invented in \cite{ceh01} and spurred
many extensions and applications since then, e.g., \cite{Bargetz2018,Kong2019}
and the book \cite{Zaslavskibook2018}, it works in general as follows. From a
current iteration point, it performs consecutively specified iterative
algorithmic steps \textquotedblleft along\textquotedblright\ different
\textquotedblleft strings\textquotedblright\ of individual constraints sets
and then takes a combination, convex or other, of the strings' end-points as
the next iterate. The string-averaging algorithmic scheme gives rise to a
variety of specific algorithms by judiciously choosing the number of strings,
their assignments and the nature of the combination of the strings'
end-points. Details are given in the sequel.

Earlier works on minimizing convex functions over the intersection of the
fixed point sets of nonexpansive mappings are all based on asymptotic
convergence of the algorithms and investigate the problem and prove
convergence of algorithms under various conditions. These include, to name but
a few, the papers of Iiduka \cite{Iiduka2012}, \cite{Iiduka2015a},
\cite{Iiduk2016}, the work of Maing\'{e} \cite{Mainge2008}, and publications
by Hayashi and Iiduka \cite{Hayashi2018} and Deutsch and Yamada
\cite{Deutsch1998}. Methods similar to HSM were studied by several
researchers, e.g., Shor \cite{Shor1985}, Albert, Iusem and Solodov
\cite{Albert1998}, Yamada and Ogura \cite{yamada2005hybrid}, Hirstoaga
\cite{Hirstoaga2006}, Martinez-Yanes and Xu \cite{Martinez-Yanes2006},
Maing\'{e} \cite{Mainge2008a}, Aoyama and Kohsaka \cite{Aoyama2014}, Cegielski
\cite{Cegielski2015}, Bello Cruz \cite{Cruz2017}.

In contrast with these, our main contribution is in the novel quest for
data-compatibility instead of asymptotic convergence properties.

The paper is organized as follows. In Section \ref{sec:data-comp} we define
the notion of data-compatibility of a point with the data of a constrained
minimization problem, and, in particular in Subsection
\ref{subsec:investigating} we discuss the problem of guaranteeing a priori
data-compatibility. In Section \ref{sect:origin} we present the origin and
motivation of our work. The problem of minimizing a convex function over the
intersection of the fixed point sets of nonexpansive mappings is defined in
Section \ref{sec:Assump present prob} along with the Hybrid Subgradient Method
(HSM) for its solution. Inexact iterates are discussed in Section
\ref{Inexact Iter section} followed in Section \ref{sec:convergence} by work
on the main result that proves the ability of the HSM to generate a
data-compatible point for the problem. We present the string-averaging variant
of the HSM in Section \ref{sect:SAPv}. In Section \ref{sect:inconsistent} we
consider a specific situation wherein the data of a constrained minimization
problem does not necessarily obey feasibility of the constraints, i.e., does
not demand that $C=\cap_{i=1}^{m}C_{i}$ is nonempty. Finally, in Section
\ref{sect:hard-soft} we describe the minimization over disjoint hard and soft
constraints sets problem and its relation to the work presented in this paper.

\section{Data-compatibility \label{sec:data-comp}}

In this section we define the notion of \textit{data-compatibility of a point
with the data of a constrained minimization problem. }Let $\Omega\subseteq H$
be a given nonempty set in the Hilbert space $H$ and let there be given, for
$i=1,2,\ldots,m,$ nonempty sets $C_{i}\subseteq\Omega.$ We denote by
$\Gamma:=\{C_{i}\}_{i=1}^{m}$ the family of sets and refer to it as the
\textquotedblleft constraints data $\Gamma$\textquotedblright.

We introduce a set $\Delta$ such that $\Omega\subseteq\Delta\subseteq H$ and
assume that we are given a function $f:\Delta\rightarrow R$ which is referred
to as \textquotedblleft the target function $f$\textquotedblright\ or, in
short, the data $f$. A pair $(\Gamma,f)$ is referred to as the
\textquotedblleft data pair $(\Gamma,f)$\textquotedblright.

\subsection{Data-compatibility for constraints\label{subsec:D-C-const}}

Constraint modelling has a prominent role in operations research and is used
in a wide range of industrial projects, such as, but by far not only, the
control of an intelligent interface linking computer aided design and
automatic inspection systems, the identification of manufacturing errors from
inspection results and the design synthesis and analysis of mechanisms, to
name a few, see, e.g., \cite{Hicks2006}. In our language, it is the modelling
of real-world problems via \textit{convex feasibility problems} (CFPs). Given
a finite family of, commonly convex, sets $\Gamma:=\{C_{i}\}_{i=1}^{m}$ the
CFP is to find a point $x^{\ast}\in C:=\cap_{i=1}^{m}C_{i}.$ This approach has
a long history, see, e.g., Combettes' or Bauschke and Borwein's corner stone
reviews \cite{Combettes1993}, \cite{Bauschke96}, respectively, Cegielski's
book \cite{Cegielski2012Book} and Bauschke and Combettes' book \cite{BC11}.
First we look at compatibility with the constraints data alone. For this we
need an appropriate \textit{proximity function} that \textquotedblleft
measures\textquotedblright\ how incompatible an $x\in\Omega$ is with the
constraints of $\Gamma$. There is no common definition in the literature, but
a proximity function $\operatorname*{Prox}_{\Gamma}:\Omega\rightarrow R_{+}$
(the nonnegative orthant) should have the property that $\operatorname*{Prox}%
_{\Gamma}(x)=0$ if and only if $x\in C:=\cap_{i=1}^{m}C_{i}.$ By
evaluating/measuring a \textquotedblleft distance\textquotedblright\ to the
constraints, the lower the value of $\operatorname*{Prox}_{\Gamma}(x)$ is --
the less incompatible $x$ is with the constraints.

A proximity function does not require that $C\neq\varnothing$ and it is a
useful tool, particularly for situations when $C\neq\varnothing$ does not
hold, or cannot be verified. An enlightening discussion of proximity functions
for the convex feasibility problem can be found in Cegielski's book
\cite[Subsection 1.3.4]{Cegielski2012Book}. An important and often used
proximity function is%
\begin{equation}
\text{Prox}_{\Gamma}(x):=\frac{1}{2}\sum_{i=1}^{m}w_{i}\left\Vert P_{C_{i}%
}(x)-x\right\Vert ^{2}, \label{eq:prox-function}%
\end{equation}
where $P_{C_{i}}(x)$ is the orthogonal (metric) projection onto $C_{i}$ and
$\{w_{i}\}_{i=1}^{m}$ is a set of weights such that $w_{i}\geq0$ and
$\sum_{i=1}^{m}w_{i}=1.$ With a proximity function at hand we define
compatibility with constraints as follows.

\begin{definition}
\label{def:epsilon-comp}$\gamma$\textbf{-compatibility with constraints data
}$\Gamma$\textbf{. }Given constraints data $\Gamma,$ a proximity function
$\operatorname*{Prox}_{\Gamma},$ and a real number $\gamma\geq0,$ we say that
a point $x\in\Omega$ is \textquotedblleft$\gamma$-compatible with $\Gamma
$\textquotedblright\ if $\operatorname*{Prox}_{\Gamma}(x)\leq\gamma.$ We
define the set of all points that are $\gamma$-compatible with $\Gamma$ by
$\Pi(\Gamma,\gamma):=\{x\in\Omega\mid\operatorname*{Prox}_{\Gamma}%
(x)\leq\gamma\}$ and call it the $(\Gamma,\gamma)$-compatibility set.
\end{definition}

\begin{definition}
\label{def:gamma-output}\textbf{The} $\gamma$\textbf{-output of a sequence.
}Given constraints data $\Gamma,$ a proximity function $\operatorname*{Prox}%
_{\Gamma},$ a real number $\gamma\geq0$ and a sequence $R:=\{x^{k}%
\}_{k=0}^{\infty}$ of points in $\Omega$, we use $O\left(  \Gamma
,\gamma,R\right)  $ to denote the point $x\in\Omega$ that has the following
properties: $\operatorname*{Prox}_{\Gamma}(x)\leq\gamma,$ and there is a
nonnegative integer $K$ such that $x^{K}=x$ and, for all nonnegative integers
$k<K$, $\operatorname*{Prox}_{\Gamma}(x)>\gamma$. If there is such an $x$,
then it is unique. If there is no such $x$, then we say that $O\left(
\Gamma,\gamma,R\right)  $ is \textit{undefined}, otherwise it is
\textit{defined}.
\end{definition}

If $R$ is an infinite sequence generated by a certain process then $O\left(
\Gamma,\gamma,R\right)  $ is the \textit{output} produced by that process when
we add to it instructions that make it terminate as soon as it reaches a point
that is $\gamma$-compatible with $\Gamma$.

The $(\Gamma,\gamma)$-compatibility set $\Pi(\Gamma,\gamma)\subseteq\Omega$
need not be nonempty for all $\gamma.$ If, however, $\Pi(\Gamma,0)\neq
\varnothing$ then $\Pi(\Gamma,0)=C.$ We have used the notion of $\gamma
$-compatibility with constraints earlier in our work on the superiorization
method, see, e.g., \cite{Censor2019}. The unconstrained minimization of
Prox$_{\Gamma}(x)$ always yields an infimum of Prox$_{\Gamma}(x)$ but this
does not guarantee that $\Pi(\Gamma,\gamma)$ is nonempty. However, $\Pi
(\Gamma,\gamma)$ can be nonempty even if the constraints intersection
$C=\cap_{i=1}^{m}C_{i}$ is empty. In order to proceed with data-compatibility
for constrained minimization in the next subsection we require that
$\Pi(\Gamma,\gamma)\neq\varnothing.$

It should be emphasized that a sequence considered in Definition
\ref{def:gamma-output} need not be convergent, or can be convergent but not
necessarily to a point in $\Pi(\Gamma,\gamma)$, and still yield an output
$O\left(  \Gamma,\gamma,R\right)  $ that is $\gamma$-compatible with $\Gamma.$

\subsection{Data-compatibility for constrained
minimization\label{subsec:D-C-const-minim}}

For a $\gamma,$ for which $\Pi(\Gamma,\gamma)\neq\varnothing,$ we define the
set of minimizers of $f$ over $\Pi(\Gamma,\gamma),$
\begin{equation}
S(f,\Pi(\Gamma,\gamma)):=\{x\in\Pi(\Gamma,\gamma)\mid f(x)\leq f(y),\text{ for
all }y\in\Pi(\Gamma,\gamma)\}.
\end{equation}
If $f$ is the zero function or if $f=$constant then $S(f,\Pi(\Gamma
,\gamma))=\Pi(\Gamma,\gamma).$ We use the distance function between a point
$x$ and a set $S$ defined as%
\begin{equation}
d(x,S):=\inf\{d(x,y)\ |\ y\in S\}
\end{equation}
where $d(x,y)$ is the distance between points $x$ and $y.$ Next we propose our
definition of compatibility with a data pair $(\Gamma,f)$.

\begin{definition}
\label{def:tau-el-compatibility}$(\tau,\bar{L})$\textbf{-compatibility with a
data pair }$(\Gamma,f).$ Given a $\tau\geq0,$ and a real number $\bar{L}>0,$
we say that a point $x\in\Omega$ is \textquotedblleft$(\tau,\bar{L}%
)$-compatible with the data pair $(\Gamma,f)$\textquotedblright\ if
$S(f,\Pi(\Gamma,\gamma))\neq\varnothing$ and the following two conditions hold%
\begin{gather}
d(x,S(f,\Pi(\Gamma,\gamma)))\leq\tau\text{ \label{eq:comp-delta}}\\
\text{and }\nonumber\\
f(x)\leq f(z)+\tau\bar{L},\text{ for all }z\in S(f,\Pi(\Gamma,\gamma)),
\label{eq:comp-S}%
\end{gather}
where the constraints data $\Gamma,$ a proximity function
$\operatorname*{Prox}_{\Gamma},$ a target function $f,$ and a $\gamma\geq0$
such that $\Pi(\Gamma,\gamma)\neq\varnothing,$ are given.
\end{definition}

This means that such a point $x$ simultaneously, (i) is \textquotedblleft
close enough\textquotedblright\ to the set of minimizers of $f$ over a
$(\Gamma,\gamma)$-compatibility set of the constraints, and (ii) has a
function value $f(x)$ \textquotedblleft close enough\textquotedblright\ to the
minimum of $f$ over that $(\Gamma,\gamma)$-compatibility set of the
constraints. This definition does not require nonemptiness of the intersection
of the constraints $C=\cap_{i=1}^{m}C_{i}$ neither does it require that the
constrained minimization problem $\min\{f(x)\mid$ $x\in C\},$ has a nonempty
solution set $\operatorname*{SOL}(f,C)$ which is defined by%
\begin{equation}
\operatorname*{SOL}(f,C):=S(f,C)=\{x\in C\mid f(x)\leq f(y),\text{ for all
}y\in C\}. \label{eq:SOL}%
\end{equation}
It relies on the weaker assumptions that $\Pi(\Gamma,\gamma)\neq\varnothing$
and $S(f,\Pi(\Gamma,\gamma))\neq\varnothing.$ Therefore, use of these notions
makes it possible to deviate from the nonemptiness assumptions which usually
lie at the heart of asymptotic analyses in optimization theory.

\begin{example}
Here is a motivating example from a specific real-world application that
stands to benefit from the situation described above. A fully-discretized
modeling approach to intensity-modulated radiation therapy (IMRT) treatment
planning, e.g., \cite{Censor2008}, leads to a very large and sparse system of
linear inequalities, see, e.g., \cite[Equation 6.13]{Brooke-2019} which is, in
practice, further equipped with box constraints on the unknown vector $x$
there. The sets $C_{i}$ are half-spaces and the, commonly used, proximity
function (\ref{eq:prox-function}) above always has a minimum value $g$ and its
solution set is nonempty. This is guaranteed, e.g., by Proposition 7 of
\cite{Combettes1994} with $\Omega=H$ and the box constraints serving the
condition there that one of the sets must be bounded. Therefore, $\Pi
(\Gamma,\gamma)$ will be nonempty for any $\gamma$ for which $g\leq\gamma.$ 

Often, an exogeneous target function is imposed on these constraints in order
to impose some prior knowledge. A commonly employed such function $\ f$ is the
\textquotedblleft total variation\textquotedblright\ (TV), see, e.g.,
\cite{TV2011} that smoothes the solution vector, see, e.g., \cite{DCSGX2015}.
In this situation we know in advance that $S(f,\Pi(\Gamma,\gamma
))\neq\emptyset$ without having to invest preliminary computing resources in
finding whether $C=\cap_{i=1}^{m}C_{i}$ is or is not empty. This example is
also relevant to Theorems \ref{thm:7.1 string} and \ref{thm:7.1 Proj} below.
\end{example}

In the consistent case when $C\neq\varnothing$ and $\operatorname*{SOL}%
(f,C)\neq\varnothing$, i.e., $\gamma=0$ and $\Pi(\Gamma,0)=C,$ Definition
\ref{def:tau-el-compatibility} takes the following form.

\begin{definition}
\label{def:consistent-data-comp}$(\tau,\bar{L})$\textbf{-compatibility with a
data pair }$(\Gamma,f)$\textbf{ in the consistent case}$.$ Given consistent
constraints data $\Gamma$ via $C:=\cap_{i=1}^{m}C_{i}\neq\varnothing,$ a
target function $f,$ a $\tau\geq0,$ and a real number $\bar{L}>0,$ we say that
a point $x\in\Omega$ is \textquotedblleft$(\tau,\bar{L})$-compatible with the
consistent data pair $(\Gamma,f)$\textquotedblright\ if $\operatorname*{SOL}%
(f,C)\neq\varnothing$ and the following two conditions hold%
\begin{gather}
d(x,\operatorname*{SOL}(f,C))\leq\tau\text{ \label{eq:data-comp-1}}\\
\text{and }\nonumber\\
f(x)\leq f(z)+\tau\bar{L},\text{ for all }z\in\operatorname*{SOL}%
(f,C).\text{\label{eq:data-comp-2}}%
\end{gather}

\end{definition}

\subsection{Data-compatibility instead of asymptotic
convergence\label{subsec:investigating}}

We consider in this paper the consistent case situation of Definition
\ref{def:consistent-data-comp}. Given a constrained minimization problem
$\min\{f(x)\mid$ $x\in C\}$ via its data pair $(\Gamma,f),$ one traditional
route in optimization theory is to design/construct an iterative process for
its solution and investigate the asymptotic convergence of the this process.
After asymptotic convergence of the iterative process has been secured a
common approach is to use the process as a basis for creating an
algorithm\footnote{An algorithm is a finite sequence of well-defined,
computer-implementable instructions, typically to solve a class of problems or
to perform a computation. However, it is common practice in the literature to
use the term \textquotedblleft algorithm\textquotedblright\ also for iterative
processes that do not include a stopping rule.}. Such an algorithm will use
the iteration formulae dictated by the process but have a user-chosen stopping
rule attached to it. The, so obtained, algorithm will stop when the stopping
rule is met and output an approximate solution to the original problem.

Various stopping rules are in use and we do not attempt to review them. But it
is clear that the question \textquotedblleft when to stop\textquotedblright%
\ has a\ profound influence on the practical output of an algorithmic run.
With roots and research in the fields of statistics and optimization,
\textquotedblleft optimal stopping is concerned with the problem of choosing a
time to take a given action based on sequentially observed random variables in
order to maximize an expected payoff or to minimize an expected
cost.\textquotedblright, see \cite{opt-stop-book-2008}.

One aspect of stopping rules is the question whether or not a particular
stopping rule will actually cause the iterative process to which it is
attached to stop and yield an output. If, for example, the stopping rule is to
stop after a specified number of iterations then the algorithmic run will
undoubtedly stop when this number is reached. On the other hand, if one
considers a feasibility-seeking problem to find a point in the intersection of
a given finite family of sets $C:=\cap_{i=1}^{m}C_{i}$ then a
feasibility-seeking iterative process that is proven to asymptotically
converge to a point in $C$ can be used. However, if one uses the stopping rule
that iterations should stop when an iterate $x^{k}$ is reached that belongs to
the intersection $C$ then the process might never stop.

\begin{problem}
The question that we ask ourselves here is whether or not it is possible to a
priori guarantee that, under certain conditions, $(\tau,\bar{L})$%
-compatibility with a data pair $(\Gamma,f)$ can be reached by an iterative
process designed to solve a constrained optimization problem?
\end{problem}

Obviously, if the considered iterative process is proven to asymptotically
converge to a point in $\operatorname*{SOL}(f,C))$ then the answer is positive
and (\ref{eq:comp-delta})-(\ref{eq:comp-S}) can be used as a stopping rule
that will indeed yield a data-compatible output. On the other hand, if an
algorithm generates $(\tau,\bar{L})$-compatible sequences for all values of
$\tau$ then all the sequences generated by it are asymptotically convergent.
Nevertheless, the notion of $(\tau,\overline{L})$-compatibility makes sense
even if it holds for certain (not all) values of $\tau$ and in this case it
can provide useful information. To approach this, inspired by \cite[Definition
2.1]{Censor2019}, we suggest the following definition for the first iterate
that satisfies both conditions (\ref{eq:data-comp-1})-(\ref{eq:data-comp-2}).

\begin{definition}
\textbf{The }$(\tau,\bar{L})$\textbf{-output of a sequence}. Given constraints
data $\Gamma,$ a proximity function $\operatorname*{Prox}_{\Gamma},$ a target
function $f,$ a $\gamma\geq0,$ a $\tau\geq0,$ and a real number $\bar{L}>0,$
we consider an infinite sequence $\mathcal{X}=\{x^{k}\}_{k=0}^{\infty}$ of
points in $\Omega.$ Let $\operatorname*{OUT}((\Gamma,f),\gamma,(\tau,\bar
{L}),\mathcal{X})$ denote the point $x\in\Omega$ \ that fulfills
(\ref{eq:data-comp-1})-(\ref{eq:data-comp-2}) and such that there exists a
nonnegative integer $K$ such that $x^{K}=x$ and for all nonnegative integers
$k<K$ at least one of the two conditions (\ref{eq:comp-delta}%
)-(\ref{eq:comp-S}) is violated. If there is such an $x,$ then it is unique.
If there is no such $x$ then we say that $\operatorname*{OUT}((\Gamma
,f),\gamma,(\tau,\bar{L}),\mathcal{X})$ is undefined, otherwise it is defined.
\end{definition}

If $\mathcal{X}$ is generated by an iterative process, then
$\operatorname*{OUT}((\Gamma,f),\gamma,(\tau,\bar{L}),\mathcal{X})$ is the
output produced by that process when we add to it instructions that make it
terminate as soon as it reaches a point that is $(\tau,\bar{L})$-compatible
with a data pair $(\Gamma,f)$.

General results that will characterize, or give conditions for, an iterative
process to be provably data-compatible are not yet known, but to initiate
research in this direction we demonstrate our approach in a specific scenario.

We work in the framework of Definition \ref{def:consistent-data-comp} and
study the behavior of an iterative process for convex minimization over fixed
point sets of nonexpansive operators. Rather than generating infinite
sequences that asymptotically converge to a point in $\operatorname*{SOL}%
(f,C),$ we specify conditions under which the iterative process generates
solutions that are $(\tau,\bar{L})$-compatible with the data pair
$(\Gamma,f).$

In the sequel (Section \ref{sect:inconsistent}) we also discuss a specific
situation wherein the data pair $(\Gamma,f),$ with $\Gamma:=\{C_{i}%
\}_{i=1}^{m}$ a family of closed and convex subsets of $H,$ not necessarily
obeys that $C=\cap_{i=1}^{m}C_{i}$ is nonempty.

\section{Origin and motivation\label{sect:origin}}

The origin of the idea of data-compatibility is the work done in
\cite{Censor2014}. There the string-averaging projected subgradient method
(SA-PSM) was developed and studied. The SA-PSM is a variant of the projected
subgradient method for solving constrained minimization problems. It differs
from the traditional projected subgradient method (PSM) by replacing, in each
iteration, the single step of projecting onto the entire constraints set with
projections onto the individual sets whose intersection is the feasible set of
the minimization problem. This is an advantage, in the frequently occurring
situations, when the individuals sets are easier to project on than their
entire intersection.

The main result in \cite[Theorem 9]{Censor2014} is not really an asymptotic
convergence theorem. Instead, it provably guarantees the ability of the
algorithm to reach an iterate of the generated sequence that is, up to
predetermined bounds, close to a \textquotedblleft solution\textquotedblright.
Here we formulate this notion and extend it to encompass also the case when
the solution set of the problem is empty.

\section{The problem and the algorithm \label{sec:Assump present prob}}

Our problem, iterative process, and main result are set in a Hilbert space.
However, some intermediate auxiliary results are true, and have some
independent value, in a general metric space. Let $\left(  X,\rho\right)
\ $be a complete metric space and let $T:X\longrightarrow X$ be an operator.
The fixed point set\ of $T$\ is%
\begin{equation}
\mathrm{Fix}\left(  T\right)  :=\left\{  x\in X\ \mid T(x)=x\right\}  .
\label{eq:1.9}%
\end{equation}

An operator $T$ is nonexpansive if it satisfies%
\begin{equation}
\rho\left(  T(x),T(y)\right)  \leq\rho\left(  x,y\right)  ,\text{ for all
}x,y\in X. \label{eq:1.8}%
\end{equation}

Given a nonempty set $E\subseteq X$ define the distance of a point $x\in X$
from it by%
\begin{equation}
d(x,E):=\inf\{\rho(x,y)\ |\ y\in E\}. \label{eq:1.8 m 1}%
\end{equation}

We denote\ the ball with center at a given $x\in X$ and radius $r>0$ by
$B(x,r).$ The execution of the operator $T$ for $n$ times consecutively on an
initial given point $x$ is denoted by $T^{n}x,$ and $T^{0}x:=x.$

For $X=H$ a Hilbert space, we look at a constrained minimization problem of
the form%
\begin{equation}
\min\{f(x)\mid x\in\mathrm{Fix}\left(  T\right)  \} \label{prob:cons-min-1}%
\end{equation}
where $f$ is a convex target function from $H$ into the reals and $T$ is a
given nonexpansive operator. Solving this problem means to%
\begin{equation}
\text{find a point }x\text{ in }\operatorname*{SOL}(f,\mathrm{Fix}\left(
T\right)  ), \label{prob:cons-min Version 2}%
\end{equation}
where
\begin{equation}
\operatorname*{SOL}(f,\mathrm{Fix}\left(  T\right)  ):=\{x\in\mathrm{Fix}%
\left(  T\right)  \mid f(x)\leq f(y){\text{ for all }}y\in\mathrm{Fix}\left(
T\right)  \}. \label{Solutoin Set}%
\end{equation}

For this task we employ an iterative Hybrid Subgradient Method (HSM) that uses
the powers of the operator $T$ combined with subgradient steps. We denote by
$\partial f(x^{k})$ the subgradient set of $f$ at $x^{k}.$

\begin{algorithm}
\label{alg:sa-psm}$\left.  {}\right.  $\textbf{Hybrid Subgradient Method
(HSM).}

\textbf{Initialization}: Let $\{\alpha_{k}\}_{k=0}^{\infty}\subset(0,1]$ be a
scalar sequence and let $x^{0}\in H$ be an arbitrary initialization vector.

\textbf{Iterative step}: Given a current iteration vector $x^{k}$ calculate
the next vector as follows:

Choose any $s^{k}\in\partial f(x^{k})$ and calculate%
\begin{equation}
x^{k+1}=T\left(  x^{k}-\alpha_{k}\frac{s^{k}}{\parallel s^{k}\parallel
}\right)  \text{,} \label{eq:alg-sa-psm-2}%
\end{equation}

but if $s^{k}=0$ then set $\frac{\textstyle s^{k}}{\textstyle\parallel
s^{k}\parallel}:=0.$
\end{algorithm}

Note that if $s^{k}=0$ then the algorithm simply calculates%
\begin{equation}
x^{k+1}=T(x^{k})\text{,} \label{eq:alg-sa-psm-1}%
\end{equation}
otherwise, it uses (\ref{eq:alg-sa-psm-2}).

As mentioned above, our data-compatibility result, presented in Theorem
\ref{thm:7.1} below, will not be about asymptotic convergence but rather
specify conditions that guarantee the existence of an iterate, of any sequence
generated by the HSM of Algorithm \ref{alg:sa-psm}, that is $(\tau,\bar{L}%
)$-compatible with the data pair $(\Gamma=Fix(T),f).$ I.e., that for every
$\tau\in(0,1),$ and any sequence $\{x^{k}\}_{k=0}^{\infty}$, generated by
Algorithm \ref{alg:sa-psm}, there exists an integer $K$ so that, for all
$k\geq K$:%
\begin{gather}
d(x^{k},\operatorname*{SOL}(f,\mathrm{Fix}\left(  T\right)  ))\leq\tau{\text{
}}\\
{\text{and }}\nonumber\\
f(x^{k})\leq f(z)+\tau\bar{L}\text{ for all }z\in\operatorname*{SOL}%
(f,\mathrm{Fix}\left(  T\right)  )
\end{gather}
where $\bar{L}$ is some well-defined constant.

\section{Inexact iterates\label{Inexact Iter section}}

In this sections we establish some properties of sequences of the form
$\left\{  T^{j}y^{0}\right\}  _{j=0}^{\infty},$ for any $y^{0}\in X,$ with
\textquotedblleft computational errors\textquotedblright. These will serve as
tools in proving the main result. In our work we need to focus on operators
that have the property formulated in the next condition.

\begin{condition}
\label{cond:condition}Let $X\ $be a complete metric space, assume that
$T:X\rightarrow X$ is a nonexpansive operator such that $\lim_{j\rightarrow
\infty}T^{j}y^{0}$ exists for any $y^{0}\in X$.
\end{condition}

Condition \ref{cond:condition} holds for many nonexpansive mappings. In
\cite{Reich2014} it was shown that for several important classes of
nonexpansive mappings this property is generic (typical) in the sense of the
Baire category. This means that a class of mappings is equipped with an
appropriate complete metric and it is shown the existence of a subset of the
space of mappings which is a countable intersection of open everywhere dense
sets such that any mappings from the intersection possesses the desirable
convergence property.

\begin{proposition}
\label{The limit is fixed point}Let $X\ $be a complete metric space, and that
$T:X\rightarrow X$ is a nonexpansive operator.

\begin{enumerate}
\item The set $\mathrm{Fix}\left(  T\right)  $ is closed.

\item If $\lim_{j\rightarrow\infty}T^{j}y^{0}$ exists for some $y^{0}\in X$,
then $\lim_{j\rightarrow\infty}T^{j}y^{0}$ is a fixed point of $T$ and,
consequently, $\mathrm{Fix}\left(  T\right)  \not =\varnothing.$
\end{enumerate}
\end{proposition}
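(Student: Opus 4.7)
The plan is to set $z := \lim_{j\to\infty} T^{j}y^{0}$ and show directly that $T(z) = z$, using only the continuity of $T$ inherited from its nonexpansiveness and the fact that a shifted tail of a convergent sequence has the same limit.

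First I would observe that the nonexpansiveness condition $\rho(T(x),T(y)) \le \rho(x,y)$ immediately yields continuity of $T$ at every point of $X$: for any sequence $w^{j} \to w$ in $X$, one has $\rho(T(w^{j}),T(w)) \le \rho(w^{j},w) \to 0$, so $T(w^{j}) \to T(w)$. In particular, applying this to the sequence $w^{j} := T^{j}y^{0}$, which converges to $z$ by hypothesis, gives $T(T^{j}y^{0}) \to T(z)$.

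Next I would identify $T(T^{j}y^{0}) = T^{j+1}y^{0}$ and note that the sequence $\{T^{j+1}y^{0}\}_{j=0}^{\infty}$ is merely the tail (one-step shift) of $\{T^{j}y^{0}\}_{j=0}^{\infty}$, so it converges to the same limit, namely $z$. Uniqueness of limits in the metric space $X$ therefore forces $T(z) = z$, that is, $z \in \mathrm{Fix}(T)$, and in particular $\mathrm{Fix}(T) \ne \varnothing$.

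There is essentially no obstacle here: the only ingredients are the $1$-Lipschitz continuity of $T$ and the elementary fact that shifting a convergent sequence by a fixed number of indices does not change its limit. No assumption on completeness of $X$, compactness, or nonemptiness of $\mathrm{Fix}(T)$ a priori is needed, since the existence of the limit $z \in X$ is given by hypothesis. The proof is two or three lines and will not require any of the more technical tools (such as \cite[Theorem 3.8.1]{Cegielski2012Book}) invoked in the discussion preceding the proposition.
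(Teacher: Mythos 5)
Your proof is correct and is exactly the standard argument the paper has in mind (the paper simply states ``The proof is obvious'' and omits it): continuity of $T$ from nonexpansiveness, applied to the convergent orbit, together with the fact that the shifted sequence $\{T^{j+1}y^{0}\}$ has the same limit, forces $T(z)=z$ by uniqueness of limits. Nothing further is needed, and your remark that no compactness or completeness is required is accurate.
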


\begin{proof}
The proof is obvious.
\end{proof}

\begin{proposition}
\label{Sa Prop 1.3}Let $X\ $be a complete and compact metric space, assume
that $T:X\rightarrow X$ is a nonexpansive operator and let $\varepsilon>0$.
Then there exists a $\delta>0$ such that for each $x\in X$ satisfying
$\rho(x,Tx)\leq\delta$ we have%
\begin{equation}
d(x,\mathrm{Fix}(T))\leq\varepsilon.
\end{equation}

\end{proposition}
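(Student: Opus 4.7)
The plan is to argue by contradiction, exploiting compactness of $X$ and continuity of $T$ (which is automatic from nonexpansiveness). Suppose the conclusion fails. Then there exist $\varepsilon>0$ and a sequence $\{x_n\}\subset X$ such that $\rho(x_n,Tx_n)\leq 1/n$ but $d(x_n,\mathrm{Fix}(T))>\varepsilon$ for every $n$. Since $X$ is compact, I pass to a convergent subsequence $x_{n_k}\to x^{*}\in X$. Nonexpansiveness gives continuity of $T$, so $Tx_{n_k}\to Tx^{*}$, and since $\rho(x_{n_k},Tx_{n_k})\to 0$ I conclude $\rho(x^{*},Tx^{*})=0$, i.e., $x^{*}\in\mathrm{Fix}(T)$.

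Having produced $x^{*}\in\mathrm{Fix}(T)$ as a limit of the $x_{n_k}$, I then estimate $d(x_{n_k},\mathrm{Fix}(T))\leq\rho(x_{n_k},x^{*})\to 0$, directly contradicting the standing assumption $d(x_{n_k},\mathrm{Fix}(T))>\varepsilon$. This contradiction forces the existence of the required $\delta$.

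The only mild subtlety is the case $\mathrm{Fix}(T)=\varnothing$, in which $d(x,\mathrm{Fix}(T))$ is not defined in the usual way. However, the above argument simultaneously shows that under the hypothesis of the proposition $\mathrm{Fix}(T)$ cannot be empty: if it were, then the continuous function $x\mapsto\rho(x,Tx)$ would attain a positive minimum $\delta_{0}>0$ on the compact set $X$, and taking any $\delta<\delta_{0}$ would make the premise $\rho(x,Tx)\leq\delta$ vacuous and the conclusion trivially true. So there is no real obstacle here; the argument is essentially a one-shot compactness extraction, and the cleanest presentation simply treats the contradiction case. No delicate choice of $\delta$ in terms of $\varepsilon$ is needed, which is typical for this kind of qualitative uniform-continuity-flavored statement.
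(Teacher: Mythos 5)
Your proof is correct and follows essentially the same route as the paper's: a contradiction sequence with $\rho(x_n,Tx_n)\le 1/n$ and $d(x_n,\mathrm{Fix}(T))>\varepsilon$, a compactness extraction, identification of the limit as a fixed point via nonexpansiveness, and the resulting contradiction with the distance lower bound. Your additional remark handling the case $\mathrm{Fix}(T)=\varnothing$ is a harmless (and reasonable) supplement that the paper leaves implicit.
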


\begin{proof}
Assume to the contrary that for each integer $k\geq1$ there exists a point
$x^{k}\in X$ such that%
\begin{equation}
\rho(x^{k},Tx^{k})\leq k^{-1}\text{ and}\;d(x^{k},\mathrm{Fix}(T))>\varepsilon
. \label{Sa (1.2)}%
\end{equation}
Since $X$ is compact, extracting a subsequence and re-indexing, if necessary,
we may assume without loss of generality that, the sequence $\{x^{k}%
\}_{k=1}^{\infty}$ so generated by the repeated use of the above negation,
converges and denote%
\begin{equation}
z:=\lim_{k\rightarrow\infty}x^{k}. \label{Sa (1.3)}%
\end{equation}
Since $T$ is nonexpansive, inequality (\ref{Sa (1.2)}) and the limit
(\ref{Sa (1.3)}) yield, for all integers $k\geq1$,%
\begin{align}
\rho(z,Tz)  &  \leq\rho(z,x^{k})+\rho(x^{k},Tx^{k})+\rho(Tx^{k},Tz)\nonumber\\
&  \leq2\rho(z,x^{k})+k^{-1}\rightarrow0,\text{ as }k\rightarrow\infty,
\end{align}
thus,%
\begin{equation}
z\in Fix(T).
\end{equation}
In view of (\ref{Sa (1.3)}), for all sufficiently large integers $k$,%
\begin{equation}
d(x^{k},\mathrm{Fix}(T))\leq d(x^{k},z)<\varepsilon.
\end{equation}
This contradicts (\ref{Sa (1.2)}) and completes the proof.
\end{proof}

\begin{lemma}
\label{Sa Lem 1.5}Let $X\ $be a complete and compact metric space, assume that
$T:X\rightarrow X$ is a nonexpansive operator for which Condition
\ref{cond:condition} holds, and let $\mu>0$. Then there exists an integer
$k_{1}$ such that for each $x\in X$ there exists $j\in\{0,1,\dots,k_{1}\}$
such that
\begin{equation}
d(T^{j}x,\mathrm{Fix}(T))\leq\mu.
\end{equation}

\end{lemma}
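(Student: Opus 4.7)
The plan is to argue by contradiction, exploiting compactness to pass to a convergent subsequence of ``bad'' starting points and then invoke Condition \ref{cond:condition} together with Proposition \ref{The limit is fixed point} to reach a contradiction. More precisely, I would suppose that no such $k_1$ exists, so that for every positive integer $n$ there is a point $x^n \in X$ with $d(T^j x^n, \mathrm{Fix}(T)) > \mu$ for every $j \in \{0,1,\dots,n\}$. By compactness of $X$, after relabeling a subsequence I may assume $x^n \to x^{\ast}$ for some $x^{\ast}\in X$.

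Next I would apply Condition \ref{cond:condition} to $x^{\ast}$ to obtain $z^{\ast} := \lim_{j\to\infty} T^j x^{\ast}$, which by Proposition \ref{The limit is fixed point} lies in $\mathrm{Fix}(T)$. Fix an integer $J$ large enough that $\rho(T^J x^{\ast}, z^{\ast}) \leq \mu/2$. Since $T$ is nonexpansive, the iterate $T^J$ is a continuous (indeed nonexpansive) self-map of $X$, so $T^J x^n \to T^J x^{\ast}$. Therefore, for all sufficiently large $n$ I have both $n \geq J$ and $\rho(T^J x^n, T^J x^{\ast}) \leq \mu/2$. A triangle-inequality estimate then yields
\begin{equation}
d(T^J x^n, \mathrm{Fix}(T)) \leq \rho(T^J x^n, z^{\ast}) \leq \rho(T^J x^n, T^J x^{\ast}) + \rho(T^J x^{\ast}, z^{\ast}) \leq \mu,
\end{equation}
which contradicts the defining property of $x^n$ at the index $j = J \leq n$, completing the argument.

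I do not expect a genuine obstacle here: the proof is structurally parallel to that of Proposition \ref{Sa Prop 1.3}, with the role of the single iterate $Tx$ replaced by a controlled power $T^J x$ whose existence is guaranteed uniformly in the starting point via the pointwise limit hypothesis. The only mildly delicate point is the order of quantifiers --- one must choose $J$ depending only on $x^{\ast}$, and then use the continuity of $T^J$ and the convergence $x^n \to x^{\ast}$ to transfer the nearness-to-$\mathrm{Fix}(T)$ from $x^{\ast}$ to $x^n$ for large enough $n$, which is why the contradiction is reached at the specific index $J$ rather than at some vague ``tail''.
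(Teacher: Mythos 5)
Your proof is correct and follows essentially the same route as the paper's: contradiction, compactness to extract a convergent subsequence of bad points, Condition \ref{cond:condition} and Proposition \ref{The limit is fixed point} applied to the limit point, and nonexpansiveness of $T^{J}$ to transfer the estimate back to $x^{n}$. If anything, your version is the more carefully quantified one --- the paper's proof leaves the index $j$ ambiguous where you explicitly fix $J$ from the convergence $T^{j}x^{\ast}\to z^{\ast}$ before taking $n$ large.
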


\begin{proof}
Assume to the contrary that for each integer $k\geq1$ there exists a point
$x^{k}\in X$ such that%
\begin{equation}
d(T^{j}x^{k},\mathrm{Fix}(T))>\mu,\text{\ for all }j=0,1,\dots,k.
\label{Sa (1.4)}%
\end{equation}
Since $X$ is compact, extracting a subsequence and re-indexing, if necessary,
we may assume without loss of generality that, the sequence $\{x^{k}%
\}_{k=1}^{\infty}$ so generated by the repeated use of the above negation,
converges and let%
\begin{equation}
z=\lim_{k\rightarrow\infty}x^{k}. \label{Sa (1.5)}%
\end{equation}
By Proposition \ref{The limit is fixed point} and Condition
\ref{cond:condition}, we conclude that%
\begin{equation}
d(T^{j}z,\mathrm{Fix}(T))<\mu.
\end{equation}
By (\ref{Sa (1.5)}) and since $T$ is nonexpansive, for all sufficiently large
integers $k$,%
\begin{equation}
d(T^{j}x^{k},\mathrm{Fix}(T))<\mu,
\end{equation}
contradicting (\ref{Sa (1.4)}), thus, concluding the proof.
\end{proof}

\begin{theorem}
\label{Sa Theo 1.4}Let $X\ $be a complete and compact metric space, assume
that $T:X\rightarrow X$ is a nonexpansive operator for which Condition
\ref{cond:condition} holds, and let $\varepsilon>0$. Then there exists a
natural number $k_{0}$ such that for each $x\in X$ and each integer $k\geq
k_{0}$,%
\begin{equation}
\rho(T^{k}x,\lim_{i\rightarrow\infty}T^{i}x)\leq\varepsilon.
\end{equation}

\end{theorem}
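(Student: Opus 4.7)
The plan is to argue by contradiction, exploiting compactness of $X$ together with Condition \ref{cond:condition} to reduce a uniform-in-$x$ statement to pointwise convergence at a single limit point.

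First I would suppose the conclusion fails: for every natural $n$ there exist $x^{n}\in X$ and an integer $k_{n}\geq n$ with
\[
\rho\bigl(T^{k_{n}}x^{n},\,\lim_{i\to\infty}T^{i}x^{n}\bigr)>\varepsilon.
\]
By compactness of $X$, passing to a subsequence and re-indexing, I may assume $x^{n}\to z$ for some $z\in X$. Condition \ref{cond:condition} supplies a point $z^{\ast}:=\lim_{i\to\infty}T^{i}z\in X$, and since $k_{n}\to\infty$, for all $n$ sufficiently large I have $\rho(T^{k_{n}}z,z^{\ast})\leq\varepsilon/3$.

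The key intermediate observation is that the pointwise-limit map $x\mapsto \lim_{i}T^{i}x$ is nonexpansive, hence continuous. This follows because nonexpansiveness gives $\rho(T^{i}x^{n},T^{i}z)\leq\rho(x^{n},z)$ for every $i$, and letting $i\to\infty$ yields
\[
\rho\bigl(\lim_{i\to\infty}T^{i}x^{n},\,z^{\ast}\bigr)\leq \rho(x^{n},z).
\]
In particular this quantity is at most $\varepsilon/3$ for all large $n$. Similarly, nonexpansiveness applied to the $k_{n}$-th iterate gives $\rho(T^{k_{n}}x^{n},T^{k_{n}}z)\leq\rho(x^{n},z)\leq\varepsilon/3$ for large $n$.

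Putting the three pieces together with the triangle inequality,
\[
\rho\bigl(T^{k_{n}}x^{n},\,\lim_{i\to\infty}T^{i}x^{n}\bigr)
\leq \rho(T^{k_{n}}x^{n},T^{k_{n}}z)+\rho(T^{k_{n}}z,z^{\ast})+\rho\bigl(z^{\ast},\lim_{i\to\infty}T^{i}x^{n}\bigr)\leq \varepsilon,
\]
contradicting the assumed strict inequality. The main obstacle is the middle step, namely the continuous dependence of the limit on the starting point; once that is established via nonexpansiveness and passage to the limit in $i$, the rest is a routine triangle-inequality estimate combined with $k_{n}\to\infty$. (Lemma \ref{Sa Lem 1.5} and Proposition \ref{Sa Prop 1.3} are not directly needed here, but the same compactness-plus-negation template underlies all three arguments.)
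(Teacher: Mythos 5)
Your proof is correct, but it follows a genuinely different route from the paper's. The paper argues directly: it first establishes (Lemma \ref{Sa Lem 1.5}) a uniform bound $k_{0}$ by which every orbit enters an $\varepsilon/2$-neighborhood of $\mathrm{Fix}(T)$, and then exploits the Fej\'{e}r-type property that $\rho(T^{k}x,z)$ is nonincreasing in $k$ for any fixed point $z$, so that once $T^{j}x$ is $\varepsilon/2$-close to some $z\in\mathrm{Fix}(T)$, all later iterates and the limit point remain $\varepsilon/2$-close to that same $z$. You instead apply the compactness-plus-negation template directly to the uniform convergence statement, and your key lemma is different: the nonexpansiveness (hence continuity) of the limit map $x\mapsto\lim_{i}T^{i}x$, obtained by passing to the limit in $\rho(T^{i}x,T^{i}y)\leq\rho(x,y)$, which lets you transfer pointwise convergence at the cluster point $z$ to the nearby $x^{n}$. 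Your argument bypasses $\mathrm{Fix}(T)$ and Lemma \ref{Sa Lem 1.5} entirely and is arguably more self-contained and cleaner (the paper's Lemma \ref{Sa Lem 1.5} has a somewhat loosely stated step about $d(T^{j}z,\mathrm{Fix}(T))<\mu$); what the paper's route buys is the intermediate quantitative picture of orbits being trapped near individual fixed points, which is in the same spirit as the Fej\'{e}r monotonicity arguments reused later in Lemma \ref{lem-8.3} and Theorem \ref{thm:7.1}. Both proofs are ultimately non-constructive in $k_{0}$, since the paper's Lemma \ref{Sa Lem 1.5} is itself proved by contradiction.
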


\begin{proof}
By Lemma \ref{Sa Lem 1.5}, there exists an integer $k_{0}$ such that for each
$x\in X$ there exists $j\in\{0,1,\dots,k_{0}\}$ so that%
\begin{equation}
d(T^{j}x,\mathrm{Fix}(T))<\varepsilon/2. \label{Sa Property (a)}%
\end{equation}
This implies that there exist a%
\begin{equation}
j\in\{0,1,\dots,k_{0}\} \label{Sa (1.6)}%
\end{equation}
and a%
\begin{equation}
z\in\mathrm{Fix}(T) \label{Sa (1.7)}%
\end{equation}
such that
\begin{equation}
\rho(T^{j}x,z)<\varepsilon/2. \label{Sa (1.8)}%
\end{equation}
Since $T$ is nonexpansive, we get, by (\ref{Sa (1.7)}) and (\ref{Sa (1.8)}),
that for all integers $k\geq j$,%
\begin{equation}
\rho(T^{k}x,z)\leq\rho(T^{j}x,z)<\varepsilon/2, \label{Sa (1.9)}%
\end{equation}
yielding,%
\begin{equation}
\rho(\lim_{i\rightarrow\infty}T^{i}x,z)\leq\varepsilon/2.
\end{equation}
Together with (\ref{Sa (1.6)}) and (\ref{Sa (1.9)}) this implies that for all
integers $k\geq k_{0}$,%
\begin{equation}
\rho(T^{k}x,\lim_{i\rightarrow\infty}T^{i}x)\leq\rho(T^{k}x,z)+\rho
(z,\lim_{i\rightarrow\infty}T^{i}x)\leq\varepsilon/2+\varepsilon
/2=\varepsilon,
\end{equation}
completing the proof.
\end{proof}

Theorem \ref{Sa Theo 1.4} implies the following theorem:

\begin{theorem}
\label{Sa Theorem 1.6 - 20200426}Let $X\ $be a complete metric space, assume
that $T:X\rightarrow X$ is a nonexpansive operator for which Condition
\ref{cond:condition} holds, the closure of $T\left(  X\right)  $ is compact,
and let $\varepsilon>0$. Then there exists a natural number $k_{0}$ such that
for each $x\in X$ and each integer $k\geq k_{0}$,%
\begin{equation}
\rho(T^{k}x,\lim_{i\rightarrow\infty}T^{i}x)\leq\varepsilon.
\end{equation}

\end{theorem}

\begin{proposition}
\label{Sa Prop 1.7}Under the assumptions of Theorem
\ref{Sa Theorem 1.6 - 20200426}, there exist an integer $k_{0}$ and a
$\delta>0$ such that for each finite sequence $\{x^{i}\}_{i=0}^{k_{0}}\subset
X$ satisfying%
\begin{equation}
\rho(x^{i+1},Tx^{i})\leq\delta,\text{ for all }i=0,1,\dots,k_{0}-1,
\label{eq:cond-prop-1.7}%
\end{equation}
the inequality%
\begin{equation}
d(x^{k_{0}},\mathrm{Fix}(T))\leq\varepsilon
\end{equation}
holds.
\end{proposition}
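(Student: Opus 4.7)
The plan is to combine the uniform convergence estimate furnished by Theorem \ref{Sa Theo 1.4} with a telescoping error-propagation bound. Since each step of the inexact sequence differs from the corresponding exact $T$-step by at most $\delta$, the drift between $x^i$ and $T^i x^0$ should accumulate by at most $i\delta$ thanks to nonexpansiveness, so if I pick $k_0$ first and then $\delta$ small enough in terms of $k_0$, the argument should close.

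Concretely, I would proceed in three steps. First, apply Theorem \ref{Sa Theo 1.4} with tolerance $\varepsilon/2$ to obtain an integer $k_0$ such that $\rho(T^{k_0} x, \lim_{i\to\infty} T^i x) \leq \varepsilon/2$ for every $x \in X$. By Proposition \ref{The limit is fixed point}, the limit lies in $\mathrm{Fix}(T)$, hence in particular $d(T^{k_0} x^0, \mathrm{Fix}(T)) \leq \varepsilon/2$.

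Second, I would prove by induction on $i \in \{0,1,\dots,k_0\}$ that any finite sequence satisfying (\ref{eq:cond-prop-1.7}) obeys $\rho(x^i, T^i x^0) \leq i\delta$. The base case is trivial, and the inductive step uses the triangle inequality together with nonexpansiveness of $T$:
\[
\rho(x^{i+1}, T^{i+1} x^0) \leq \rho(x^{i+1}, T x^i) + \rho(T x^i, T(T^i x^0)) \leq \delta + \rho(x^i, T^i x^0).
\]
Third, set $\delta := \varepsilon/(2k_0)$. Then the induction gives $\rho(x^{k_0}, T^{k_0} x^0) \leq k_0\delta = \varepsilon/2$, and one more triangle inequality combined with the first step yields
\[
d(x^{k_0}, \mathrm{Fix}(T)) \leq \rho(x^{k_0}, T^{k_0} x^0) + d(T^{k_0} x^0, \mathrm{Fix}(T)) \leq \varepsilon/2 + \varepsilon/2 = \varepsilon.
\]

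I do not anticipate a genuine obstacle here; the crucial conceptual point is that the integer $k_0$ supplied by Theorem \ref{Sa Theo 1.4} is \emph{uniform} in the starting point, so one is free to fix $k_0$ first and then calibrate $\delta$ in terms of it, rather than the other way around. This order of quantifiers is precisely what allows the linear error accumulation $k_0\delta$ to be tamed, and it is the reason compactness (via Theorem \ref{Sa Theo 1.4}) rather than pointwise convergence alone is needed.
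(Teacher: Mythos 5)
Your proof is correct and follows essentially the same route as the paper's: obtain a uniform $k_{0}$ from Theorem \ref{Sa Theo 1.4}, propagate the per-step error through nonexpansiveness to get $\rho(x^{i},T^{i}x^{0})\leq i\delta$ by induction, and then choose $\delta$ proportional to $\varepsilon/k_{0}$ to close with the triangle inequality. The only cosmetic difference is that the paper splits the tolerance as $\varepsilon/4+\varepsilon/4$ rather than $\varepsilon/2+\varepsilon/2$; your observation about the order of quantifiers (fix $k_{0}$ first, then $\delta$) is exactly the point the paper's argument relies on.
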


\begin{proof}
Theorem \ref{Sa Theorem 1.6 - 20200426} implies that there exists an integer
$k_{0}$ such that for each $x\in X$,%
\begin{equation}
d(T^{k_{0}}x,\mathrm{Fix}(T))\leq\varepsilon/4. \label{Sa (1.10)}%
\end{equation}
Define%
\begin{equation}
\delta:=4^{-1}\varepsilon(k_{0})^{-1}, \label{Sa (1.11)}%
\end{equation}
assume that $\{x^{i}\}_{i=0}^{k_{0}}\subset X$ satisfies
(\ref{eq:cond-prop-1.7}) and set%
\begin{equation}
y^{0}:=x^{0},\;y^{i+1}:=Ty^{i},\;\text{for all }i=0,1,\dots,k_{0}-1.
\label{Sa (1.13)}%
\end{equation}
In view of (\ref{Sa (1.10)}) and (\ref{Sa (1.13)}),%
\begin{equation}
d(y^{k_{0}},\mathrm{Fix}(T))\leq\varepsilon/4. \label{Sa (1.14)}%
\end{equation}
Next we show, by induction, that%
\begin{equation}
\rho(y^{i},x^{i})\leq i\delta,\;\text{for all }i=0,1,\dots,k_{0}.
\label{Sa (1.15)}%
\end{equation}
Equation (\ref{Sa (1.13)}) implies that (\ref{Sa (1.15)}) holds for $i=0$. Let
$i\in\{0,1,\dots,k_{0}-1\}$ for which (\ref{Sa (1.15)}) holds. By the
nonexpansiveness of $T$, (\ref{eq:cond-prop-1.7}), (\ref{Sa (1.13)}) and
(\ref{Sa (1.15)}),%
\begin{align}
\rho(y^{i+1},x^{i+1})  &  =\rho(Ty^{i},x^{i+1})\nonumber\\
&  \leq\rho(Ty^{i},Tx^{i})+\rho(Tx^{i},x^{i+1})\leq\rho(y^{i},x^{i}%
)+\delta\leq(i+1)\delta,
\end{align}
in particular,%
\begin{equation}
\rho(y^{k_{0}},x^{k_{0}})\leq k_{0}\delta. \label{Sa (1.16)}%
\end{equation}
It follows now from (\ref{Sa (1.11)}), (\ref{Sa (1.14)}) and (\ref{Sa (1.16)})
that%
\begin{equation}
d(x^{k_{0}},\mathrm{Fix}(T))\leq d(x^{k_{0}},y^{k_{0}})+d(y^{k_{0}%
},\mathrm{Fix}(T))\leq\varepsilon/4+\varepsilon/4,
\end{equation}
which concludes the proof.
\end{proof}

\begin{theorem}
\label{Sa Theo 1.6}Under the assumptions of Theorem
\ref{Sa Theorem 1.6 - 20200426}, there exist an integer $k_{0}$ and a
$\delta>0$ such that for each sequence $\{x^{i}\}_{i=0}^{\infty}\subset X$
satisfying $\rho(x^{i+1},Tx^{i})\leq\delta$, for all $i=0,1,\dots,$ the
inequality%
\begin{equation}
d(x^{i},\mathrm{Fix}(T))\leq\varepsilon
\end{equation}
holds for all integers $i\geq k_{0}.$
\end{theorem}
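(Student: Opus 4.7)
The plan is to derive Theorem \ref{Sa Theo 1.6} as an almost immediate consequence of Proposition \ref{Sa Prop 1.7} by a shifting (or ``sliding window'') argument. The key observation is that the pair $(k_{0},\delta)$ delivered by Proposition \ref{Sa Prop 1.7} is \emph{uniform}: it is produced before any initial point is specified and serves every finite sequence in $X$ satisfying the inexactness bound, since the proposition quantifies ``for each finite sequence $\{x^{i}\}_{i=0}^{k_{0}}$''. I would therefore reuse exactly the same $k_{0}$ and $\delta$ as the constants required by the present theorem, without ever needing to redo the compactness or nonexpansiveness analysis.

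Given an infinite sequence $\{x^{i}\}_{i=0}^{\infty}\subset X$ with $\rho(x^{i+1},Tx^{i})\leq\delta$ for every $i\geq 0$, and given an arbitrary integer $n\geq k_{0}$, I would introduce the shifted finite sequence $y^{j}:=x^{n-k_{0}+j}$ for $j=0,1,\dots,k_{0}$. The inexactness hypothesis for the original sequence, restricted to the block of indices $i=n-k_{0},\dots,n-1$, translates verbatim into $\rho(y^{j+1},Ty^{j})\leq\delta$ for all $j=0,\dots,k_{0}-1$. Applying Proposition \ref{Sa Prop 1.7} to the finite sequence $\{y^{j}\}_{j=0}^{k_{0}}$ then yields $d(y^{k_{0}},\mathrm{Fix}(T))\leq\varepsilon$, and since $y^{k_{0}}=x^{n}$ by construction, this is precisely the bound claimed at index $n$. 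Because $n\geq k_{0}$ was arbitrary, the conclusion holds for every $i\geq k_{0}$.

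I do not anticipate a substantive obstacle, since all of the compactness and nonexpansive machinery has already been absorbed into the chain Proposition \ref{Sa Prop 1.3} $\Rightarrow$ Lemma \ref{Sa Lem 1.5} $\Rightarrow$ Theorem \ref{Sa Theo 1.4} $\Rightarrow$ Proposition \ref{Sa Prop 1.7}. The only subtlety worth flagging is the uniformity noted above: if the $k_{0}$ and $\delta$ from Proposition \ref{Sa Prop 1.7} were permitted to depend on the initial point of the finite orbit, then the sliding-window step could not be instantiated with a single pair $(k_{0},\delta)$ for every $n$, and the argument would collapse. A direct reading of Proposition \ref{Sa Prop 1.7}, where the constants are fixed before the quantifier over sequences, confirms that this uniformity holds, and the proof is then complete in a few lines.
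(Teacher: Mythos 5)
Your proof is correct and follows exactly the route the paper intends: the paper's own proof of Theorem \ref{Sa Theo 1.6} consists of the single line ``The proof follows from Proposition \ref{Sa Prop 1.7}'', and your sliding-window argument, applying that proposition to the shifted finite block $y^{j}:=x^{n-k_{0}+j}$, is precisely the omitted detail. Your remark about the uniformity of $(k_{0},\delta)$ over all finite sequences is the right point to check, and it does hold as stated in the proposition.
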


\begin{proof}
The proof follows from Proposition \ref{Sa Prop 1.7}.
\end{proof}

Theorem \ref{Sa Theo 1.6} implies the next result.

\begin{theorem}
\label{thm:thm5.1}Under the assumptions of Theorem
\ref{Sa Theorem 1.6 - 20200426}, if we take a sequence%
\begin{equation}
\{\mu_{k}\}_{k=1}^{\infty}\subset(0,\infty),\text{ }\lim_{k\rightarrow\infty
}\mu_{k}=0, \label{eq:5.3}%
\end{equation}
then there exists an integer $k_{1}>0$ such that for each sequence
$\{x^{i}\}_{i=0}^{\infty}\subset X$ satisfying
\begin{equation}
\rho(x^{i+1},Tx^{i})\leq\mu_{i+1},\;i=0,1,\dots, \label{eq:5.3 second}%
\end{equation}
the inequality $d(x^{k},\mathrm{Fix}(T))\leq\varepsilon$ holds for all
integers $k\geq k_{1}$.
\end{theorem}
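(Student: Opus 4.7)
The plan is to reduce Theorem \ref{thm:thm5.1} directly to Theorem \ref{Sa Theo 1.6} by truncating the sequence past the index at which the variable error bounds $\mu_i$ drop below the uniform threshold $\delta$ supplied by Theorem \ref{Sa Theo 1.6}.

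First, I would apply Theorem \ref{Sa Theo 1.6} to the given $\varepsilon>0$, producing an integer $k_0$ and a number $\delta>0$ such that whenever a sequence $\{y^i\}_{i=0}^{\infty}\subset X$ satisfies $\rho(y^{i+1},Ty^i)\leq\delta$ for all $i\geq 0$, the inequality $d(y^i,\mathrm{Fix}(T))\leq\varepsilon$ holds for all $i\geq k_0$. Next, I would use the hypothesis $\lim_{k\to\infty}\mu_k=0$ from (\ref{eq:5.3}) to choose an integer $N\geq 0$ with the property that $\mu_k\leq\delta$ for every $k\geq N+1$.

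Then, given any sequence $\{x^i\}_{i=0}^{\infty}\subset X$ satisfying (\ref{eq:5.3 second}), I would define the shifted sequence $y^i:=x^{N+i}$ for $i=0,1,\dots$. By the choice of $N$ and (\ref{eq:5.3 second}), for every $i\geq 0$ one has
\begin{equation}
\rho(y^{i+1},Ty^i)=\rho(x^{N+i+1},Tx^{N+i})\leq\mu_{N+i+1}\leq\delta,
\end{equation}
so $\{y^i\}$ satisfies the hypothesis of Theorem \ref{Sa Theo 1.6}. Therefore $d(y^i,\mathrm{Fix}(T))\leq\varepsilon$ for all $i\geq k_0$, which, translated back, reads $d(x^k,\mathrm{Fix}(T))\leq\varepsilon$ for all $k\geq N+k_0$. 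Setting $k_1:=N+k_0$ then yields the conclusion, with $k_1$ depending only on $\varepsilon$ and the rate at which the prescribed tail of $\{\mu_k\}$ decays below $\delta$ (and not on the particular sequence $\{x^i\}$).

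There is essentially no hard step here; the only point requiring a little care is observing that $k_1$ can be chosen uniformly in the sequence $\{x^i\}$, which follows because both $k_0$ and $N$ depend only on $\varepsilon$, $T$, and the fixed scalar sequence $\{\mu_k\}$, not on the iterates themselves.
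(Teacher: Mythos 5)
Your proof is correct and is exactly the deduction the paper intends: the paper states only that Theorem \ref{Sa Theo 1.6} implies Theorem \ref{thm:thm5.1} without writing out the argument, and your tail-shifting construction (choosing $N$ so that $\mu_k\leq\delta$ for $k\geq N+1$ and setting $k_1=N+k_0$) supplies precisely the missing details. The observation that $k_1$ is uniform in the sequence $\{x^i\}$ is also handled correctly.
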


\section{Data-compatibility of the hybrid subgradient method
\label{sec:convergence}}

Our main data-compatibility result in Theorem \ref{thm:7.1} below is obtained
under the following assumptions: $T:H\rightarrow H$ is a nonexpansive operator
for which Condition \ref{cond:condition} holds, the closure of $T\left(
H\right)  $ (which denoted by $cl\left(  T\left(  H\right)  \right)  $) is
compact, $f:H\rightarrow R$ convex function, Lipschitz on any bounded set.

Since $cl\left(  T\left(  H\right)  \right)  $ is compact, there exists a ball
$B(0,M),$ with $M>0,$ such that%
\begin{equation}
\mathrm{Fix}\left(  T\right)  \subset cl\left(  T\left(  H\right)  \right)
\subset B(0,M), \label{eq 52 N}%
\end{equation}
which means that the set $\mathrm{Fix}(T)$ is bounded.

Proposition \ref{The limit is fixed point} yields that $\mathrm{Fix}(T)$ is
closed. Since $\mathrm{Fix}(T)$ is closed subset of a compact set, which is
$cl\left(  T\left(  H\right)  \right)  $, then it easy to see that
$\mathrm{Fix}(T)$ is compact also. By Condition \ref{cond:condition} and
Proposition \ref{The limit is fixed point} we get that $\mathrm{Fix}(T)$ is
nonempty. The convexity of $f$ on $H$ implies the continuity of $f$ on the
space\ $H$ and especially on the subset $\mathrm{Fix}(T)$. The continuity of
$f$ on $\mathrm{Fix}(T)$ and the compactness of $\mathrm{Fix}(T),$ implies
that there exists some point $x\in\operatorname*{SOL}(f,\mathrm{Fix}(T)),$
i.e., $\operatorname*{SOL}(f,\mathrm{Fix}(T))\neq\varnothing$.

Since $f$ is Lipschitz on any bounded set, there exists a number $\bar{L}>1$
such that%
\begin{equation}
|f(z^{1})-f(z^{2})|\leq\bar{L}||z^{1}-z^{2}||,{\text{ for all }}z^{1},z^{2}\in
B(0,3M+2). \label{eq 53 N}%
\end{equation}

We need the following lemma to prove the main result.

\begin{lemma}
{\label{lem-8.3}}Assume $T:H\rightarrow H$ is a nonexpansive operator for
which Condition \ref{cond:condition} holds and $cl\left(  T\left(  H\right)
\right)  $ is compact. Let $f:H\rightarrow R$ convex function and Lipschitz on
any bounded set. Let $\bar{x}\in\operatorname*{SOL}(f,\mathrm{Fix}(T))$ and
let $\Delta\in(0,1],$ $\alpha>0$ and $x\in cl\left(  T\left(  H\right)
\right)  $ satisfy%
\begin{equation}
\left\Vert x\right\Vert \leq3M+2,\;f(x)>f(\bar{x})+\Delta, \label{eq:8.3}%
\end{equation}
where $M$ is as in (\ref{eq 52 N}). Further, let $v\in\partial f(x).$ Then
$v\not =0$ and%
\begin{equation}
y:=T\left(  x-\alpha||v||^{-1}v\right)
\end{equation}
satisfies%
\begin{equation}
\Vert y-\bar{x}\Vert^{2}\leq\Vert x-\bar{x}\Vert^{2}-2\alpha(4\bar{L}%
)^{-1}\Delta+\alpha^{2},
\end{equation}
where $\bar{L}$ is as in (\ref{eq 53 N}). Moreover,%
\begin{equation}
d(y,\operatorname*{SOL}(f,\mathrm{Fix}(T)))^{2}\leq d(x,\operatorname*{SOL}%
(f,\mathrm{Fix}(T)))^{2}-2\alpha(4\bar{L})^{-1}\Delta+\alpha^{2}.
\end{equation}

\end{lemma}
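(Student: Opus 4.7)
My plan is a one-step descent analysis in the spirit of the projected subgradient method, using the nonexpansiveness of $T$ together with the fixed-point identity $T(\bar{x}) = \bar{x}$ to absorb the outer application of $T$. First I would dispose of $v \neq 0$: the subgradient inequality $f(\bar{x}) \geq f(x) + \langle v, \bar{x} - x\rangle$ combined with $f(x) > f(\bar{x}) + \Delta$ forces $\langle v, x - \bar{x}\rangle > \Delta > 0$, which immediately rules out $v = 0$.

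Next I would turn to the quantitative bound. Setting $u := x - \alpha\|v\|^{-1}v$ so that $y = T(u)$, the nonexpansiveness of $T$ and $T(\bar{x}) = \bar{x}$ give
\begin{equation*}
\|y - \bar{x}\|^2 \leq \|u - \bar{x}\|^2 = \|x - \bar{x}\|^2 - 2\alpha\|v\|^{-1}\langle v, x - \bar{x}\rangle + \alpha^2.
\end{equation*}
I would then combine the subgradient bound $\langle v, x - \bar{x}\rangle \geq f(x) - f(\bar{x}) > \Delta$ with an upper estimate $\|v\| \leq \bar{L}$, extracted from the Lipschitz estimate (\ref{eq 53 N}) via the standard observation that a subgradient of an $L$-Lipschitz convex function has norm at most $L$. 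The factor of $4$ in the lemma is simply a conservative slack (since $\|v\| \leq \bar{L}$ implies $\|v\|^{-1} \geq \bar{L}^{-1} \geq (4\bar{L})^{-1}$); the hypothesis $\|x\| \leq 3M+2$ gives the room inside the Lipschitz region $Y \cap B(0,3M+2)$ that makes this standard bound valid. Substituting $\|v\|^{-1} \geq (4\bar{L})^{-1}$ and the subgradient lower bound into the expansion yields the claimed estimate on $\|y - \bar{x}\|^2$.

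For the second inequality on $d(y, \operatorname*{SOL}(f, \mathrm{Fix}(T)))^2$, I would use that $\operatorname*{SOL}(f, \mathrm{Fix}(T))$ is nonempty, closed, and compact (by continuity of $f$ on the compact set $\mathrm{Fix}(T)$) to pick a $\bar{x} \in \operatorname*{SOL}(f, \mathrm{Fix}(T))$ realizing $\|x - \bar{x}\| = d(x, \operatorname*{SOL}(f, \mathrm{Fix}(T)))$. Applying the first inequality to this particular $\bar{x}$ and using $d(y, \operatorname*{SOL}(f, \mathrm{Fix}(T))) \leq \|y - \bar{x}\|$ immediately delivers the second inequality. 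The main obstacle I expect is justifying $\|v\| \leq \bar{L}$ cleanly when $x$ can approach the boundary of $Y$ (where a subgradient of a convex function defined only on $Y$ can in principle blow up due to normal-cone contributions); the generous radius $3M+2$ in the Lipschitz hypothesis is precisely what makes the standard subgradient-to-Lipschitz estimate go through. Once that bound is in hand, everything else is a routine subgradient-step manipulation.
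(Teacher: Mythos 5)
Your overall architecture (kill the outer $T$ by nonexpansiveness and $T(\bar{x})=\bar{x}$, expand $\Vert x-\alpha\Vert v\Vert^{-1}v-\bar{x}\Vert^{2}$, lower-bound the cross term) matches the paper's, your argument for $v\neq0$ is fine, and your derivation of the ``moreover'' inequality by applying the first estimate to the nearest point of the nonempty compact set $\operatorname*{SOL}(f,\mathrm{Fix}(T))$ is correct (and fills in a step the paper leaves implicit). The one place where you diverge from the paper is exactly the step that does not hold up: the bound $\Vert v\Vert\leq\bar{L}$ (you in fact only need $\Vert v\Vert\leq4\bar{L}$, but the difficulty is the same). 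Since $f$ is defined only on the compact set $Y$, at a boundary point $x$ of $Y$ the subdifferential $\partial f(x)$ contains arbitrarily long vectors obtained by adding normal-cone directions, and the lemma (like the algorithm, which merely ``chooses'' $s^{k}\in\partial f(x^{k})$) places no restriction ruling these out. The radius $3M+2$ does not rescue this: it governs where the Lipschitz estimate (\ref{eq 53 N}) is available, not whether $x$ is interior to $Y$. You correctly flagged this as the main obstacle, but the resolution you offer does not address it. The failure is not cosmetic, either: if $v$ has a huge normal component, the unit vector $\Vert v\Vert^{-1}v$ can be nearly orthogonal to $x-\bar{x}$, so the middle link of your chain $\Vert v\Vert^{-1}\langle v,x-\bar{x}\rangle>\Vert v\Vert^{-1}\Delta\geq(4\bar{L})^{-1}\Delta$ genuinely breaks.

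The paper obtains the needed inequality $\langle\Vert v\Vert^{-1}v,\bar{x}-x\rangle<-4^{-1}\bar{L}^{-1}\Delta$ without ever bounding $\Vert v\Vert$, and this is the idea your proposal is missing. It applies the subgradient inequality not at $\bar{x}$ but at the shifted point $\bar{z}:=\bar{x}+4^{-1}\bar{L}^{-1}\Delta\Vert v\Vert^{-1}v$: the Lipschitz estimate gives $f(z)\leq f(\bar{x})+4^{-1}\Delta$ for all $z\in B(\bar{x},4^{-1}\Delta\bar{L}^{-1})$, hence $\langle v,z-x\rangle\leq f(z)-f(x)\leq-(3/4)\Delta<0$ on that ball; evaluating at $z=\bar{z}$ and using $\langle\Vert v\Vert^{-1}v,\bar{z}-\bar{x}\rangle=4^{-1}\bar{L}^{-1}\Delta$ yields the normalized bound directly. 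This argument is invariant under rescaling $v$, which is precisely the robustness your route lacks. (It does implicitly require $f$ to be defined at $\bar{z}$, a point the paper glosses over, but that is a much milder issue than an unbounded subdifferential.) To repair your proof you would either need to add a hypothesis such as $f$ being convex and Lipschitz on an open neighborhood of $Y$, or that $v$ is the minimal-norm subgradient; otherwise you should adopt the shifted-point device.
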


\begin{proof}
From (\ref{eq:8.3}) $v\not =0$. For $\bar{x}\in\operatorname*{SOL}%
(f,\mathrm{Fix}(T))$, we have, by (\ref{eq 53 N}) and (\ref{eq 52 N}), that
for each $z\in B(\bar{x},4^{-1}\Delta\bar{L}^{-1})$,%
\begin{equation}
f(z)\leq f(\bar{x})+\bar{L}||z-\bar{x}||\leq f(\bar{x})+4^{-1}\Delta.
\end{equation}
Therefore, (\ref{eq:8.3}) and $v\in\partial f(x)$, imply that%
\begin{equation}
\left\langle v,z-x\right\rangle \leq f(z)-f(x)\leq-(3/4)\Delta,\text{ for all
}z\in B(\bar{x},4^{-1}\Delta\bar{L}^{-1}).
\end{equation}
From this inequality we deduce that%
\begin{equation}
\left\langle ||v||^{-1}v,z-x\right\rangle <0,{\text{ for all }}z\in B(\bar
{x},4^{-1}\Delta\bar{L}^{-1}),
\end{equation}
or, setting $\bar{z}:=\bar{x}+4^{-1}\bar{L}^{-1}\Delta\Vert v\Vert^{-1}v,$
that%
\begin{equation}
0>\left\langle ||v||^{-1}v,\bar{z}-x\right\rangle =\left\langle ||v||^{-1}%
v,\bar{x}+4^{-1}\bar{L}^{-1}\Delta\Vert v\Vert^{-1}v-x\right\rangle .
\end{equation}
This leads to%
\begin{equation}
\left\langle ||v||^{-1}v,\bar{x}-x\right\rangle <-4^{-1}\bar{L}^{-1}\Delta.
\end{equation}
Putting $\tilde{y}:=x-\alpha\Vert v\Vert^{-1}v,$ we arrive at%
\begin{align}
\Vert\tilde{y}-\bar{x}\Vert^{2}  &  =\Vert x-\alpha\Vert v\Vert^{-1}v-\bar
{x}\Vert^{2}\nonumber\\
&  =\Vert x-\bar{x}\Vert^{2}-2\left\langle x-\bar{x},\alpha\Vert v\Vert
^{-1}v\right\rangle +\alpha^{2}\nonumber\\
&  \leq\Vert x-\bar{x}\Vert^{2}-2\alpha(4\bar{L})^{-1}\Delta+\alpha^{2}.
\end{align}
From all the above we obtain%
\begin{align}
\Vert y-\bar{x}\Vert^{2}  &  =\Vert T\tilde{y}-\bar{x}\Vert^{2}\leq\Vert
\tilde{y}-\bar{x}\Vert^{2}\nonumber\\
&  \leq\Vert x-\bar{x}\Vert^{2}-2\alpha(4\bar{L})^{-1}\Delta+\alpha^{2},
\end{align}
which completes the proof.
\end{proof}

Now we present the main theorem showing that sequences generated by the hybrid
subgradient method (HSM) have a $(\tau,\bar{L})$-output, i.e., contain an
iterate that is data-compatible. The condition (\ref{eq:7.3}) in this and
subsequent theorems is common in many results in optimization theory, see,
e.g., Theorem 2.2 and many subsequent theorems in N. Shor's classic book
\cite{Shor1985}.

\begin{theorem}
{\label{thm:7.1}} Assume $T:H\rightarrow H$ is a nonexpansive operator for
which Condition \ref{cond:condition} holds and $cl\left(  T\left(  H\right)
\right)  $ is compact. Let $f:H\rightarrow R$ convex function and Lipschitz on
any bounded set. Let%
\begin{equation}
\{\alpha_{k}\}_{k=0}^{\infty}\subset(0,1],\text{ be a sequence such that }%
\lim_{k\rightarrow\infty}\alpha_{k}=0\text{ and}\;\sum_{k=0}^{\infty}%
\alpha_{k}=\infty, \label{eq:7.3}%
\end{equation}
let $\bar{L}$ be fixed, as defined by (\ref{eq 53 N}), and let $\tau\in(0,1)$.
Then there exists an integer $K$ such that for any sequence $\{x^{k}%
\}_{k=1}^{\infty}\subset cl\left(  T\left(  H\right)  \right)  $, generated by
Algorithm \ref{alg:sa-psm} , the inequalities%
\begin{gather}
d(x^{k},\operatorname*{SOL}(f,\mathrm{Fix}\left(  T\right)  ))\leq\tau{\text{
}}\\
{\text{and }}\nonumber\\
f(x^{k})\leq f(z)+\tau\bar{L}\text{ for all }z\in\operatorname*{SOL}%
(f,\mathrm{Fix}\left(  T\right)  )
\end{gather}
hold for all integers $k\geq K$.
\end{theorem}

\begin{proof}
Fix an $\bar{x}\in\operatorname*{SOL}(f,\mathrm{Fix}(T)).$ It is not difficult
to see that there exists a number $\tau_{0}\in(0,\tau/4)$ such that for each
$x\in cl\left(  T\left(  H\right)  \right)  $ satisfying $d(x,\mathrm{Fix}%
(T))\leq\tau_{0}$ and $f(x)\leq f(\bar{x})+\tau_{0}$ we have%
\begin{equation}
d(x,\operatorname*{SOL}(f,\mathrm{Fix}(T)))\leq\tau/4. \label{eq:P2}%
\end{equation}
Since $\{x^{k}\}_{k=1}^{\infty}$ is generated by Algorithm \ref{alg:sa-psm} we
know, from (\ref{eq:alg-sa-psm-1}), (\ref{eq:alg-sa-psm-2}) and (\ref{eq:1.8}%
), that%
\begin{equation}
\left\Vert x^{k}-Tx^{k-1}\right\Vert \leq\alpha_{k-1},\text{ for all }k\geq1.
\label{eq:8.23}%
\end{equation}
Thus, by Theorem \ref{thm:thm5.1} and (\ref{eq:7.3}), there exists an integer
$n_{1}$ such that%
\begin{equation}
d(x^{k},\mathrm{Fix}(T))\leq\tau_{0},\text{ for all }k\geq n_{1}.
\label{eq:8.24}%
\end{equation}
This, along with (\ref{eq 52 N}), guarantees that%
\begin{equation}
\Vert x^{k}\Vert\leq M+1,{\text{ for all }}k\geq n_{1}. \label{eq:8.25}%
\end{equation}
Choose a positive $\tau_{1}$ for which $\tau_{1}<(8\bar{L})^{-1}\tau_{0},$ by
(\ref{eq:7.3}) there is an integer $n_{2}>n_{1}$ such that%
\begin{equation}
\alpha_{k}\leq\tau_{1}(32)^{-1},{\text{ for all }}k>n_{2}, \label{eq:8.16}%
\end{equation}
and so, there is an integer $n_{0}>n_{2}+4$ such that%
\begin{equation}
\sum_{k=n_{2}}^{n_{0}-1}\alpha_{k}>8(2M+1)^{2}\bar{L}\tau_{0}^{-1}.
\label{eq:8.17}%
\end{equation}

We show now that there exists an integer $p\in\lbrack n_{2}+1,n_{0}]$ such
that $f(x^{p})\leq f(\bar{x})+\tau_{0}$. Assuming the contrary means that for
all $k\in\lbrack n_{2}+1,n_{0}]$,
\begin{equation}
f(x^{k})>f(\bar{x})+\tau_{0}. \label{eq:8.26}%
\end{equation}
By (\ref{eq:8.26}), (\ref{eq:7.3}), (\ref{eq:8.25}) and using Lemma
\ref{lem-8.3}, with $\Delta=\tau_{0}$, $\alpha=\alpha_{k}$, $x=x^{k}$,
$y=x^{k+1}$, $v=s^{k}$, we get, for all $k\in\lbrack n_{2}+1,n_{0}]$,%
\begin{align}
&  d(x^{k+1},\operatorname*{SOL}(f,\mathrm{Fix}(T)))^{2}\nonumber\\
&  \leq d(x^{k},\operatorname*{SOL}(f,\mathrm{Fix}(T)))^{2}-2\alpha_{k}%
(4\bar{L})^{-1}\tau_{0}+\alpha_{k}^{2}.
\end{align}
According to the choice of $\tau_{1}$ and by (\ref{eq:8.16}) this implies that
for all $k\in\lbrack n_{2}+1,n_{0}]$,%
\begin{align}
&  d(x^{k},\operatorname*{SOL}(f,\mathrm{Fix}(T)))^{2}-d(x^{k+1}%
,\operatorname*{SOL}(f,\mathrm{Fix}(T)))^{2}\nonumber\\
&  \geq\alpha_{k}[(2\bar{L})^{-1}\tau_{0}-\alpha_{k}]\nonumber\\
&  \geq\alpha_{k}(4\bar{L})^{-1}\tau_{0},
\end{align}
which, together with (\ref{eq:8.25}) and (\ref{eq 52 N}), gives%
\begin{align}
&  (2M+1)^{2}\nonumber\\
&  \geq d(x^{n_{2}+1},\operatorname*{SOL}(f,\mathrm{Fix}(T)))^{2}\nonumber\\
&  \geq\sum_{k=n_{2}+1}^{n_{0}}\left(  d(x^{k},\operatorname*{SOL}%
(f,\mathrm{Fix}(T)))^{2}-d(x^{k+1},\operatorname*{SOL}(f,\mathrm{Fix}%
(T)))^{2}\right) \nonumber\\
&  \geq(4\bar{L})^{-1}\tau_{0}\sum_{k=n_{2}+1}^{n_{0}}\alpha_{k},
\end{align}
and%
\begin{equation}
\sum_{k=n_{2}+1}^{n_{0}}\alpha_{k}\leq(2M+1)^{2}4\bar{L}\tau_{0}^{-1}.
\end{equation}
This contradicts (\ref{eq:8.17}), proving that there is an integer
$p\in\lbrack n_{2}+1,n_{0}]$ such that $f(x^{p})\leq f(\bar{x})+\tau_{0}$.
Thus, by (\ref{eq:8.24}) and (\ref{eq:P2}),%
\begin{equation}
d(x^{p},\operatorname*{SOL}(f,\mathrm{Fix}(T)))\leq\tau/4.
\end{equation}
We show that for all $k\geq p$, $d(x^{k},\operatorname*{SOL}(f,\mathrm{Fix}%
(T)))\leq\tau$. Assuming the contrary,%
\begin{equation}
\text{there exists a }q>p\text{ such that }d(x^{q},\operatorname*{SOL}%
(f,\mathrm{Fix}(T)))>\tau. \label{eq:8.31}%
\end{equation}
We may assume, without loss of generality, that%
\begin{equation}
d(x^{k},\operatorname*{SOL}(f,\mathrm{Fix}(T)))\leq\tau,{\text{ for all }%
}p\leq k<q. \label{eq:8.32}%
\end{equation}
One of the following two cases must hold: (i) $f(x^{q-1})\leq f(\bar{x}%
)+\tau_{0},$ or (ii) $f(x^{q-1})>f(\bar{x})+\tau_{0}.$ In case (i), since
$p\in\lbrack n_{2}+1,n_{0}],$ (\ref{eq:8.24}), (\ref{eq:8.25}) and
(\ref{eq:P2}) show that%
\begin{equation}
d(x^{q-1},\operatorname*{SOL}(f,\mathrm{Fix}(T)))\leq\tau/4.
\end{equation}
Thus, there is a point $z\in\operatorname*{SOL}(f,\mathrm{Fix}(T))$ such that
$\left\Vert x^{q-1}-z\right\Vert <\tau/3.$ Using this fact and (\ref{eq:8.23}%
), (\ref{eq:1.8}), (\ref{eq:1.9}) and (\ref{eq:8.16}), yields%
\begin{align}
\left\Vert x^{q}-z\right\Vert  &  \leq\left\Vert x^{q}-Tx^{q-1}\right\Vert
+\left\Vert Tx^{q-1}-z\right\Vert \nonumber\\
&  \leq\alpha_{q-1}+\left\Vert x^{q-1}-z\right\Vert \leq\tau/4+\tau/3,
\end{align}
proving that $d(x^{q},\operatorname*{SOL}(f,\mathrm{Fix}(T)))\leq\tau.$ This
contradicts (\ref{eq:8.31}) and implies that case (ii) must hold, namely that
$f(x^{q-1})>f(\bar{x})+\tau_{0}$. This, along with (\ref{eq:8.25}),
(\ref{eq:8.16}), the choice of $\tau_{1}$, (\ref{eq:8.32}) and Lemma
\ref{lem-8.3}, with $\Delta=\tau_{0}$, $\alpha=\alpha_{q-1}$, $x=x^{q-1}$,
$y=x^{q}$, shows that%
\begin{align}
&  d(x^{q},\operatorname*{SOL}(f,\mathrm{Fix}(T)))^{2}\nonumber\\
&  \leq d(x^{q-1},\operatorname*{SOL}(f,\mathrm{Fix}(T)))^{2}-2\alpha
_{q-1}(4\bar{L})^{-1}\tau_{0}+\alpha_{q-1}^{2}\nonumber\\
&  \leq d(x^{q-1},\operatorname*{SOL}(f,\mathrm{Fix}(T)))^{2}-\alpha
_{q-1}((2\bar{L})^{-1}\tau_{0}-\alpha_{q-1})\nonumber\\
&  \leq d(x^{q-1},\operatorname*{SOL}(f,\mathrm{Fix}(T)))^{2}\leq\tau^{2},
\end{align}
namely, that $d(x^{q},\operatorname*{SOL}(f,\mathrm{Fix}(T)))\leq\tau.$ This
contradicts (\ref{eq:8.31}), proving that, for all $k\geq p$, $d(x^{k}%
,\operatorname*{SOL}(f,\mathrm{Fix}(T)))\leq\tau$. Together with
(\ref{eq 52 N}) and (\ref{eq 53 N}) this implies that, for all $k\geq n_{0}$,%
\begin{equation}
f(x^{k})\leq f(z)+\tau\bar{L}\text{ for all }z\in\operatorname*{SOL}%
(f,\mathrm{Fix}\left(  T\right)  ),
\end{equation}
and the proof is complete.
\end{proof}

\section{\textbf{The string-averaging hybrid subgradient }\newline%
\textbf{method }(SA-HSM)\textbf{ }\label{sect:SAPv}}

Assume that $O_{1},O_{2},\dots,O_{m}$ are nonexpansive operators mapping $H$
into $H,$ for which%
\begin{equation}
\mathcal{F}:=%
%TCIMACRO{\tbigcap \limits_{i=1}^{m}}%
%BeginExpansion
{\textstyle\bigcap\limits_{i=1}^{m}}
%EndExpansion
\mathrm{Fix}\left(  O_{i}\right)  \neq\varnothing\label{Non Empty Inter}%
\end{equation}
Let $f:H\rightarrow R$ be a convex function and Lipschitz on any bounded set.
We are interested in solving the following problem by using a string-averaging
algorithmic scheme.%
\begin{equation}
\min\{f(x)\mid x\in\mathcal{F}\} \label{Prov_inter 1}%
\end{equation}
whose solution means to%
\begin{equation}
\text{find a point }x\text{ in }\operatorname*{SOL}(f,\mathcal{F}),
\label{Prov_inter  2}%
\end{equation}
where%

\begin{equation}
\operatorname*{SOL}(f,\mathcal{F}):=\{x\in\mathcal{F}\mid f(x)\leq
f(y),{\text{ for all }}y\in\mathcal{F}\}. \label{Sol_Def_Inter}%
\end{equation}

For $t=1,2,\dots,\Theta,$ let the \textit{string} $I_{t}$ be an ordered subset
of $\{1,2,\dots,m\}$ of the form%
\begin{equation}
I_{t}=(i_{1}^{t},i_{2}^{t},\dots,i_{m(t)}^{t}), \label{block}%
\end{equation}
with $m(t)$ the number of elements in $I_{t}.$ For any $x\in H,$ the product
of operators along a string $I_{t},$ $t=1,2,\dots,\Theta,$ is
\begin{equation}
F_{t}(x):=O_{i_{m(t)}^{t}}\cdots O_{i_{2}^{t}}O_{i_{1}^{t}}(x),
\label{notation 1}%
\end{equation}
and is called a \textquotedblleft string operator\textquotedblright.

We deal with string-averaging of fixed strings and fixed weights. To this end
we assume that%
\begin{equation}
\{1,2,\dots,m\}\subset%
%TCIMACRO{\dbigcup \limits_{t=1}^{\Theta}}%
%BeginExpansion
{\displaystyle\bigcup\limits_{t=1}^{\Theta}}
%EndExpansion
I_{t} \label{eq:contain}%
\end{equation}
and that a system of nonnegative weights $w_{1,}w_{2},\cdots,w_{\Theta}$ such
that $\sum_{t=1}^{\Theta}w_{t}=1$ is fixed and given. We define the operator%
\begin{equation}
O(x):=\sum_{t=1}^{\Theta}w_{t}F_{t}(x). \label{eq:sum}%
\end{equation}
This operator will be called \textquotedblleft fit\textquotedblright\ if the
strings that define it obey (\ref{eq:contain}). We will need the following condition.

\begin{condition}
\label{Strict inequality}For all $i=1,2,\ldots,m,$ the following holds: For
any $y\in H\backslash\mathrm{Fix}\left(  O_{i}\right)  $ there exist
$x\in\mathcal{F=}%
%TCIMACRO{\tbigcap \limits_{i=1}^{m}}%
%BeginExpansion
{\textstyle\bigcap\limits_{i=1}^{m}}
%EndExpansion
\mathrm{Fix}\left(  O_{i}\right)  $ such that $\left\Vert O_{i}\left(
y\right)  -x\right\Vert <\left\Vert y-x\right\Vert .$
\end{condition}

\begin{proposition}
\label{FixO_Eq_FixInt}Let $O_{1},O_{2},\dots,O_{m}$ be nonexpansive operators
$O_{i}:H\rightarrow H$, and let $O=\sum_{t=1}^{\Theta}w_{t}F_{t}(x),$ be as in
(\ref{eq:sum}). If (\ref{eq:contain}) and condition \ref{Strict inequality}
hold, then $\mathrm{Fix}\left(  O\right)  =\mathcal{F}.$
\end{proposition}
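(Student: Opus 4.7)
The plan is to prove the two inclusions separately. The direction $\mathcal{F}\subseteq\mathrm{Fix}(O)$ is immediate: if $x\in\mathcal{F}$ then every $O_i$ fixes $x$, so each string operator satisfies $F_t(x)=x$, and therefore $O(x)=\sum_{t=1}^{\Theta}w_t x=x$. Note that this direction uses neither (\ref{eq:contain}) nor Condition \ref{Strict inequality}. The substantive content is the reverse inclusion $\mathrm{Fix}(O)\subseteq\mathcal{F}$, which I would obtain by chaining two ``equality collapse'' arguments together.

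Fix $x\in\mathrm{Fix}(O)$ and pick any anchor $z\in\mathcal{F}$, available by (\ref{Non Empty Inter}). Using $\sum_t w_t=1$, the triangle inequality, and the fact that each $F_t$ is a composition of nonexpansive maps and fixes $z$, I would write
\begin{equation*}
\|x-z\|=\Big\|\sum_{t=1}^{\Theta}w_t\bigl(F_t(x)-z\bigr)\Big\|\le\sum_{t=1}^{\Theta}w_t\|F_t(x)-z\|\le\sum_{t=1}^{\Theta}w_t\|x-z\|=\|x-z\|.
\end{equation*}
Since each $\|F_t(x)-z\|\le\|x-z\|$ and their convex combination equals this upper bound, every positively-weighted term must attain equality: $\|F_t(x)-z\|=\|x-z\|$ for every $t$ with $w_t>0$. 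Now I telescope along the string. For such a $t$ set $y_0:=x$ and $y_j:=O_{i_j^t}(y_{j-1})$, so that $y_{m(t)}=F_t(x)$. Nonexpansiveness of each $O_{i_j^t}$ at the fixed point $z$ gives $\|y_0-z\|\ge\|y_1-z\|\ge\cdots\ge\|y_{m(t)}-z\|$, a chain whose endpoints are now equal; hence every link is an equality, $\|O_{i_j^t}(y_{j-1})-z\|=\|y_{j-1}-z\|$ for all $j$.

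The last step uses Condition \ref{Strict inequality}. Crucially, the whole chain of equalities above was derived for an arbitrary $z\in\mathcal{F}$, so it holds simultaneously for every $z\in\mathcal{F}$. If some $y_{j-1}$ failed to lie in $\mathrm{Fix}(O_{i_j^t})$, the condition would supply a specific $z^*\in\mathcal{F}$ with $\|O_{i_j^t}(y_{j-1})-z^*\|<\|y_{j-1}-z^*\|$, contradicting the equality we just established with that very $z^*$. Therefore $y_{j-1}\in\mathrm{Fix}(O_{i_j^t})$, so $y_j=y_{j-1}$, and induction from $y_0=x$ yields $x=y_0=y_1=\cdots=y_{m(t)}$ with $x\in\mathrm{Fix}(O_{i_j^t})$ for every index appearing in $I_t$. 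Running this over every string with $w_t>0$ and applying (\ref{eq:contain}) to cover all of $\{1,\dots,m\}$ gives $x\in\bigcap_{i=1}^{m}\mathrm{Fix}(O_i)=\mathcal{F}$.

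The point requiring most care, which is also where the proof would break if handled casually, is the quantifier order: one must first establish the equalities $\|O_{i_j^t}(y_{j-1})-z\|=\|y_{j-1}-z\|$ uniformly in $z\in\mathcal{F}$ before invoking Condition \ref{Strict inequality}, because the witness $z^*$ produced by that condition depends on the alleged non-fixed point $y_{j-1}$ and on $i_j^t$. Both the convex-combination step and the telescoping step preserve the universal quantifier over $z$, so the argument closes cleanly. A minor bookkeeping issue is that strings with $w_t=0$ contribute nothing to $O$ and are not constrained by the argument; the natural reading of a ``fit'' operator is that the covering (\ref{eq:contain}) should be understood over the strings actually participating in $O$, i.e.\ those with $w_t>0$.
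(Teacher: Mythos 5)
Your proof is correct, and it is organized genuinely differently from the paper's. The paper argues the inclusion $\mathrm{Fix}(O)\subseteq\mathcal{F}$ by contradiction in the forward direction: it takes $\widehat{y}\in\mathrm{Fix}(O)\setminus\mathcal{F}$, picks an index $\widehat{i}$ with $\widehat{y}\notin\mathrm{Fix}(O_{\widehat{i}})$, invokes Condition \ref{Strict inequality} once to obtain a witness $\overline{x}$ with $\Vert O_{\widehat{i}}(\widehat{y})-\overline{x}\Vert<\Vert\widehat{y}-\overline{x}\Vert$, and then asserts that nonexpansiveness propagates this strict decrease through the string operators to give $\Vert O(\widehat{y})-\overline{x}\Vert<\Vert\widehat{y}-\overline{x}\Vert$. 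You instead run an ``equality collapse'': from $O(x)=x$ you first force $\Vert F_t(x)-z\Vert=\Vert x-z\Vert$ for every positively weighted string and every $z\in\mathcal{F}$, then force equality at every link of every such string, and only then apply Condition \ref{Strict inequality} link by link. Your version buys something real: the paper's one-line propagation step is only immediate when $O_{\widehat{i}}$ is the \emph{first} operator of its string, since otherwise $O_{\widehat{i}}$ acts on an intermediate point $y_{j-1}\ne\widehat{y}$ that could a priori be fixed by $O_{\widehat{i}}$ even though $\widehat{y}$ is not, and the witness supplied by the condition is tied to the point actually fed into $O_{\widehat{i}}$. Your quantifier discipline --- establishing the link equalities uniformly in $z\in\mathcal{F}$ \emph{before} letting the condition choose its witness --- together with the induction $y_0=y_1=\cdots=y_{m(t)}=x$ is exactly what closes that gap. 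Your closing caveat is also apt: with $w_t\ge0$ a string of weight zero contributes to (\ref{eq:contain}) but not to $O$, so the proposition requires (\ref{eq:contain}) to be read over the strings with $w_t>0$ (or all weights to be positive); the paper's proof tacitly makes the same assumption.
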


\begin{proof}
Clearly, $\mathcal{F}\subset\mathrm{Fix}\left(  O\right)  ,$ therefore, it is
sufficient to prove that $\mathrm{Fix}\left(  O\right)  \subset\mathcal{F}.$
Assume by negation that $\widehat{y}\in\mathrm{Fix}x\left(  O\right)  $ such
that $\widehat{y}\notin\mathcal{F}.$ This means that there is an
$1\leq\widehat{i}\leq m$ such that $\widehat{y}\notin\mathrm{Fix}\left(
O_{\widehat{i}}\right)  .$ Condition \ref{Strict inequality} implies that
there exist an $\overline{x}\in\mathcal{F}$ that satisfies $\left\Vert
O_{\widehat{i}}\left(  \widehat{y}\right)  -\overline{x}\right\Vert
<\left\Vert \widehat{y}-\overline{x}\right\Vert $. From this inequality, since
$O_{1},O_{2},\dots,O_{m}$ are nonexpansive operators, it is easy to see that%
\begin{equation}
\left\Vert O\left(  \widehat{y}\right)  -\overline{x}\right\Vert =\left\Vert
\sum_{t=1}^{\Theta}w_{t}F_{t}\left(  \widehat{y}\right)  -\overline
{x}\right\Vert \leq\sum_{t=1}^{\Theta}w_{t}\left\Vert F_{t}\left(  \widehat
{y}\right)  -\overline{x}\right\Vert <\left\Vert \widehat{y}-\overline
{x}\right\Vert ,
\end{equation}
and, consequently, that $\widehat{y}\notin\mathrm{Fix}\left(  O\right)  .$
This contradicts the negation assumption made above and completes the proof.
\end{proof}

We propose the following string-averaging hybrid subgradient method (SA-HSM)
for solving the problem (\ref{Prov_inter 1}).

\begin{algorithm}
\label{String-alg:sa-psm}\textbf{String-Averaging Hybrid Subgradient Method
(SA-HSM).}

\textbf{Initialization}: Let $\{\alpha_{k}\}_{k=0}^{\infty}\subset(0,1]$ be a
scalar sequence and let $x^{0}\in H$ be an arbitrary initialization vector.

\textbf{Iterative step}: Given a current iteration vector $x^{k}$ calculate
the next vector as follows:

Choose any $s^{k}\in\partial f(x^{k})$ and calculate%
\begin{equation}
x^{k+1}=O\left(  x^{k}-\alpha_{k}\frac{s^{k}}{\parallel s^{k}\parallel
}\right)  \text{,} \label{eq:alg-sa-psm-2_string}%
\end{equation}

but if $s^{k}=0$ then set $\frac{\textstyle s^{k}}{\textstyle\parallel
s^{k}\parallel}:=0.$
\end{algorithm}

The next theorem shows that sequences generated by the string-averaging hybrid
subgradient method (SA-HSM) have a $(\tau,\bar{L})$-output, i.e., contain an
iterate that is data-compatible.

\begin{theorem}
{\label{thm:7.1 string}}Let $O_{1},O_{2},\dots,O_{m}$ be nonexpansive
operators mapping $H$ into $H,$ such that $\mathcal{F}=%
%TCIMACRO{\tbigcap \limits_{i=1}^{m}}%
%BeginExpansion
{\textstyle\bigcap\limits_{i=1}^{m}}
%EndExpansion
\mathrm{Fix}\left(  O_{i}\right)  \neq\varnothing$ and $cl\left(  O_{i}\left(
H\right)  \right)  $ is compact for all $i=1,2,\ldots,m.$ Let $O=\sum
_{t=1}^{\Theta}w_{t}F_{t}(x)$ be as in (\ref{eq:sum}) and assume that
$\lim_{j\rightarrow\infty}O^{j}y^{0}$ exists for any $y^{0}\in H$. Let
$f:H\rightarrow R$ be a convex function and Lipschitz on any bounded set. Let
$\{\alpha_{k}\}_{k=0}^{\infty}\subset(0,1]$ be a sequence such that%
\begin{equation}
\lim_{k\rightarrow\infty}\alpha_{k}=0\text{ and}\;\sum_{k=0}^{\infty}%
\alpha_{k}=\infty, \label{eq:7.3_String}%
\end{equation}
let $\bar{L}$ be fixed, as defined by (\ref{eq 53 N}), and let $\tau\in(0,1)$.
If (\ref{eq:contain}) and condition \ref{Strict inequality} hold then there
exists an integer $K$ such that for any sequence $\{x^{k}\}_{k=0}^{\infty
}\subset H$, generated by Algorithm \ref{String-alg:sa-psm}, the inequalities%
\begin{gather}
d(x^{k},\operatorname*{SOL}(f,\mathcal{F})\leq\tau{\text{ }}\\
{\text{and }}\nonumber\\
f(x^{k})\leq f(z)+\tau\bar{L}\text{ for all }z\in\operatorname*{SOL}%
(f,\mathcal{F})
\end{gather}
hold for all integers $k\geq K$.
\end{theorem}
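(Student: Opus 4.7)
The plan is to reduce Theorem \ref{thm:7.1 string} to the already-established Theorem \ref{thm:7.1} by treating the string-averaged operator $O$ as a single nonexpansive operator $T$ and invoking the HSM result. Structurally, Algorithm \ref{String-alg:sa-psm} is exactly Algorithm \ref{alg:sa-psm} with $T$ replaced by $O$, so once the hypotheses of Theorem \ref{thm:7.1} are verified for $O$ acting on $Y := S$, the desired conclusions transfer directly.

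First, I would check that $O: S \to S$ is nonexpansive. Each $O_i: S \to S$ is nonexpansive, so each string operator $F_t = O_{i_{m(t)}^t} \circ \cdots \circ O_{i_1^t}$ is a well-defined nonexpansive self-map of $S$. Since $S$ is convex and $\{w_t\}_{t=1}^{\Theta}$ is a convex combination, $O = \sum_{t=1}^{\Theta} w_t F_t$ maps $S$ into $S$, and its nonexpansiveness follows by the triangle inequality applied to the convex combination. Second, I would identify the fixed point set: hypothesis (\ref{eq:contain}) together with Condition \ref{Strict inequality} lets me invoke Proposition \ref{FixO_Eq_FixInt} to conclude that $\mathrm{Fix}(O) = \mathcal{F}$, hence $\operatorname{SOL}(f, \mathrm{Fix}(O)) = \operatorname{SOL}(f, \mathcal{F})$. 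Third, the assumption that $\lim_{j \to \infty} O^j y^0$ exists for every $y^0 \in S$ is precisely Condition \ref{cond:condition} applied to $O$.

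With these three ingredients in hand, the hypotheses of Theorem \ref{thm:7.1} are satisfied with $Y := S$ and $T := O$: the ambient space is convex and compact, $O$ is nonexpansive and satisfies Condition \ref{cond:condition}, $f$ is convex on $S$, the step-size sequence $\{\alpha_k\}$ satisfies (\ref{eq:7.3_String}), and $\tau \in (0,1)$. Since the iterative step of Algorithm \ref{String-alg:sa-psm} produces $x^{k+1} = O(x^k)$ or $x^{k+1} = O(x^k - \alpha_k s^k/\|s^k\|)$, the generated sequence is identical to one produced by Algorithm \ref{alg:sa-psm} with $T = O$. Applying Theorem \ref{thm:7.1} then furnishes the integer $K$ and constant $\bar{L}$ with the two inequalities, and substituting $\mathrm{Fix}(O) = \mathcal{F}$ gives the form stated in the theorem.

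There is essentially no main obstacle here, since the heavy analytic work was already performed in Theorem \ref{thm:7.1}, and Proposition \ref{FixO_Eq_FixInt} bridges $\mathrm{Fix}(O)$ with $\mathcal{F}$. The only points requiring mild care are the routine verifications that $O$ is a nonexpansive self-map of $S$ and that the HSASM iteration matches the HSM iteration applied to $O$; both follow immediately from the definitions.
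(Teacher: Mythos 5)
Your proposal is correct and follows essentially the same route as the paper: the authors likewise observe that $O$ is nonexpansive as a convex combination of compositions of nonexpansive self-maps of the convex set $S$, invoke Proposition \ref{FixO_Eq_FixInt} (via (\ref{eq:contain}) and Condition \ref{Strict inequality}) to identify $\mathrm{Fix}(O)=\mathcal{F}$, and then apply Theorem \ref{thm:7.1} with $T=O$. Your write-up is somewhat more explicit about verifying each hypothesis, but there is no substantive difference.
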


\begin{proof}
Since $O_{1},O_{2},\dots,O_{m}$ are nonexpansive and $O$ is a fit operator, it
follows that that $O$ is nonexpansive. Moreover, condition
\ref{Strict inequality} and proposition \ref{FixO_Eq_FixInt} ensure that
$\mathrm{Fix}\left(  O\right)  =\mathcal{F}.$ This, along with the other
assumptions of the theorem, enable the use of Theorem \ref{thm:7.1} to
complete the proof.
\end{proof}

\section{Data-compatibility for constrained minimization with inconsistent
constraints\label{sect:inconsistent}}

In this section we consider a data pair $(\Gamma,f),$ assuming that
$\Gamma:=\{C_{i}\}_{i=1}^{m}$ is a family of closed and convex subsets of $H,$
not necessarily obeying that $C:=\cap_{i=1}^{m}C_{i}\neq\varnothing$. Let
$\{w_{i}\}_{i=1}^{m}$ be a set of weights such that $w_{i}\geq0$ and
$\sum_{i=1}^{m}w_{i}=1.$ It is well-known that the operator $P_{w}:=\sum
_{i=1}^{m}w_{i}P_{C_{i}}$ is nonexpansive and satisfies%
\begin{equation}
\mathrm{Fix}\left(  P_{w}\right)  =\operatorname*{Arg}\min\{\text{Prox}%
_{\Gamma}(x)\mid x\in H\},
\end{equation}
where Prox$_{\Gamma}(x):=\frac{1}{2}\sum_{i=1}^{m}w_{i}\left\Vert P_{C_{i}%
}(x)-x\right\Vert ^{2},$ see the succinct \cite[Subsection 5.4]%
{Cegielski2012Book} on the simultaneous projection method. If $C\neq
\varnothing$ then $\mathrm{Fix}\left(  P_{w}\right)  =C.$ If, however,
$C=\varnothing$ then $\mathrm{Fix}\left(  P_{w}\right)  =\Pi(\Gamma,\gamma)$
for $\gamma=\min\{$Prox$_{\Gamma}(x)\mid x\in H\}$ and is nonempty. Moreover,
for any $y^{0}\in H$ the limit $\lim_{k\rightarrow\infty}(P_{w})^{k}y^{0}$
exists and belong to $\mathrm{Fix}\left(  P_{w}\right)  $.

Consider the following algorithm.

\begin{algorithm}
\label{alg:Sim Proj sa-psm}\textbf{Simultaneous Projection Hybrid Subgradient
Method (SP-HSM).}

\textbf{Initialization}: Let $\{\alpha_{k}\}_{k=0}^{\infty}\subset(0,1]$ be a
scalar sequence and let $x^{0}\in H$ be an arbitrary initialization vector.

\textbf{Iterative step}: Given a current iteration vector $x^{k}$ calculate
the next vector as follows:

Choose any $s^{k}\in\partial f(x^{k})$ and calculate%
\begin{equation}
x^{k+1}=P_{w}\left(  x^{k}-\alpha_{k}\frac{s^{k}}{\parallel s^{k}\parallel
}\right)  \text{,}%
\end{equation}

but if $s^{k}=0$ then set $\frac{\textstyle s^{k}}{\textstyle\parallel
s^{k}\parallel}:=0.$
\end{algorithm}

From the above assumptions and discussion we obtain the following theorem as a
consequence of Theorem \ref{thm:7.1}. It does not assume consistency of the
underlying constraints $\Gamma=\{C_{i}\}_{i=1}^{m},$ and shows that sequences
generated by the simultaneous projection hybrid subgradient method (SP-HSM)
have a $(\tau,\bar{L})$-output, i.e., contain an iterate that is
data-compatible. Observe in the next theorem that $\overline{L}$ is a fixed
constant that obeys (\ref{eq 53 N}) and that the parameter $\tau$ must obey
$\tau\in(0,1),$ so, once $\overline{L}$ is fixed the \textquotedblleft
user\textquotedblright\ can choose a small $\tau$ so that $\tau\overline{L}$
is as small as he wants.

\begin{theorem}
\label{thm:7.1 Proj}Assume that $cl\left(  P_{w}\left(  H\right)  \right)  $
is compact. Let $f:H\rightarrow R$ be a convex function and Lipschitz on any
bounded set, let%
\begin{equation}
\{\alpha_{k}\}_{k=0}^{\infty}\subset(0,1],\text{ be a sequence such that }%
\lim_{k\rightarrow\infty}\alpha_{k}=0\text{ and}\;\sum_{k=0}^{\infty}%
\alpha_{k}=\infty,
\end{equation}
let $\bar{L}$ be fixed, as defined by (\ref{eq 53 N}), and let $\tau\in(0,1)$.
Then there exists an integer $K$ such that, for any sequence $\{x^{k}%
\}_{k=0}^{\infty}\subset H$ generated by Algorithm \ref{alg:Sim Proj sa-psm},
the inequalities%
\begin{gather}
d(x^{k},\operatorname*{SOL}(f,\mathrm{Fix}\left(  P_{w}\right)  ))\leq
\tau{\text{ }}\\
{\text{and }}\nonumber\\
f(x^{k})\leq f(z)+\tau\bar{L}\text{ for all }z\in\operatorname*{SOL}%
(f,\mathrm{Fix}\left(  P_{w}\right)  )
\end{gather}
hold for all integers $k\geq K$.\bigskip
\end{theorem}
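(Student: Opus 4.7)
The plan is to recognize Theorem \ref{thm:7.1 Proj} as essentially an instance of Theorem \ref{thm:7.1} applied to the specific operator $T := P_w$ on the compact convex set $Y := S$. Once I check each hypothesis of Theorem \ref{thm:7.1}, the conclusion follows verbatim because the recursion in Algorithm \ref{alg:Sim Proj sa-psm} coincides with that of Algorithm \ref{alg:sa-psm} under the identification $T = P_w$.

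First I would verify that $P_w : S \to S$ is well-defined and nonexpansive. The hypothesis of the theorem includes $P_w(x) \in S$ for all $x \in S$, so the self-mapping property is given. Nonexpansiveness of $P_w$ is a standard consequence of the nonexpansiveness of each metric projection $P_{C_i}$ onto a closed convex set, combined with the convexity of the norm: $\|P_w(x) - P_w(y)\| \leq \sum_{i=1}^m w_i \|P_{C_i}(x) - P_{C_i}(y)\| \leq \sum_{i=1}^m w_i \|x - y\| = \|x-y\|$, which is precisely the statement used in Section \ref{sect:inconsistent} when asserting that $P_w$ is nonexpansive.

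Next I would verify Condition \ref{cond:condition} for $T = P_w$. This is already recorded in the discussion preceding Algorithm \ref{alg:Sim Proj sa-psm}: for any $y^0 \in H$ (hence in particular for any $y^0 \in S$) the iterates $(P_w)^k y^0$ converge, and the limit lies in $\mathrm{Fix}(P_w)$. So Condition \ref{cond:condition} holds for $P_w$ on $S$. Together with the theorem's hypotheses that $S$ is a convex compact subspace of $H$, $f : S \to \mathbb{R}$ is convex, $\{\alpha_k\} \subset (0,1]$ with $\alpha_k \to 0$ and $\sum \alpha_k = \infty$, and $\tau \in (0,1)$, we have matched every assumption of Theorem \ref{thm:7.1}.

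The final step is to apply Theorem \ref{thm:7.1} with $Y = S$ and $T = P_w$ to the sequence $\{x^k\}_{k=0}^\infty$ produced by Algorithm \ref{alg:Sim Proj sa-psm}, which by the identification above is exactly a sequence produced by Algorithm \ref{alg:sa-psm} for this operator. Theorem \ref{thm:7.1} yields an integer $K$ and a constant $\bar{L}$ (the Lipschitz constant of $f$ on $S \cap B(0,3M+2)$ as in \eqref{eq 53 N}) such that the two asserted inequalities hold for all $k \geq K$, with $\mathrm{Fix}(T)$ replaced by $\mathrm{Fix}(P_w)$, which is exactly the conclusion sought. There is no substantive obstacle here: the work of the paper is already done in Theorem \ref{thm:7.1}, and the only ingredients specific to the inconsistent-constraints setting—nonexpansiveness of $P_w$ and existence of $\lim_k (P_w)^k y^0$—have been supplied in Section \ref{sect:inconsistent}.
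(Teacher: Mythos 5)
Your proposal is correct and follows exactly the route the paper takes: the paper presents Theorem \ref{thm:7.1 Proj} as an immediate consequence of Theorem \ref{thm:7.1} with $Y=S$ and $T=P_w$, relying on the nonexpansiveness of $P_w$ and the convergence of the iterates $(P_w)^k y^0$ recorded in Section \ref{sect:inconsistent} to supply Condition \ref{cond:condition}. Your write-up simply makes these verifications explicit.
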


\section{Minimization over disjoint hard and soft constraints
sets\label{sect:hard-soft}}

Here we describe the minimization over disjoint hard and soft constraints sets
problem and its relation to our work in this paper. The issue of hard and soft
constraints often arises in convex feasibility problems (CFPs), mentioned in
Subsection \ref{subsec:D-C-const} above, see, e.g., \cite{Combettes1999}. Many
studies consider the, often occurring, situation when the CFP is not
consistent, i.e., the intersection of all constraints is empty, see, e.g.,
\cite{BauschkeBS1997} and the recent review \cite{CZ2018}. In that case hard
constraints are those which definitely must be satisfied, while soft
constraint are those we would like to be satisfied - but not at the expense of
the others.

Let $\Gamma_{1}:=\{C_{i}\}_{i=1}^{m_{1}}$ and $\Gamma_{2}:=\{Q_{i}%
\}_{i=1}^{m_{2}}$ be two finite families of constraints such that
$C:=\cap_{i=1}^{m_{1}}C_{i}\neq\varnothing$ and $Q:=\cap_{i=1}^{m_{2}}%
Q_{i}\neq\varnothing$ but $C\cap Q=\varnothing$ and let $C$ and $Q$ be the
hard and soft constraints, respectively. In view of the inability to solve the
CFP given by $C\cap Q,$ it makes sense to look for a point that will solve the
hard/soft-CFP (h/s-CFP): Find a point in $C$ that is closest to the set $Q,$
according to some metric, say the Euclidean distance.

This can be done, in principle, by using the well-known 1959 Cheney-Goldstein
theorem \cite{Cheney1959} that specifies conditions under which alternating
metric projections onto two sets are guaranteed to converge to the best
approximation pair. A \textit{best approximation pair }relative to two closed
convex sets $C$ and $Q$ is a pair $(c,q)\in C\times Q$ attaining $\left\Vert
{c-q}\right\Vert =\min\left\Vert {C-Q}\right\Vert $, where $C-Q:=\left\{
x-y\mid x\in C,\text{ }y\in Q\right\}  ,$ see, e.g., Deutsch's book
\cite{Deutsch2001} or the recent \cite{Aharoni2018}.

Cheney and Goldstein considered the case of two nonempty closed convex sets
$C$ and $Q$ in Hilbert space, with $P_{1}$ and $P_{2}$ denoting the orthogonal
projection (proximity map) onto $C$ and $Q,$ respectively, and $T:=P_{1}P_{2}%
$. They show that the sequence $x^{k}$ $=$ $T^{k}(x^{0})$ obtained by
alternating distance minimizations converges to a fixed point of $T,$
regardless whether $C\cap Q=\varnothing$ or not, if either (i) one of the two
sets is compact, or (ii) one set is finite-dimensional and the distance
between the two sets is attained. In particular, when the intersection of the
two convex sets is nonempty, the sequence $\{x^{k}\}_{k=0}^{\infty}$ converges
to a member of that intersection, in either of the two cases above, see, e.g.,
Subsection 2.1 of \cite{CZ2018}.

The problem of minimization over disjoint hard and soft constraints sets
occurs when the fixed point set $\mathrm{Fix}\left(  T\right)  $ is larger
than a singleton and we want to find in it a minimizer of some given target
function $f,$ leading to a constrained minimization problem like
(\ref{prob:cons-min-1}). The SA-PSM of \cite{Censor2014}, mentioned in Section
\ref{sect:origin} above, will not apply to this problem but the hybrid
subgradient method (HSM) of Algorithm \ref{alg:sa-psm} studied here will,
allowing us to reach a data-compatible point.\bigskip

\vskip 6mm
\noindent{\bf Acknowledgments}

\noindent   We greatly appreciate the comprehensive and very
constructive referee report that helped us improve the paper. The work of Y.C.
is supported by the ISF-NSFC joint research program grant No. 2874/19.
\bibliographystyle{amsplain}
\bibliography{Maroun-final}

\providecommand{\bysame}{\leavevmode\hbox to3em{\hrulefill}\thinspace}
\providecommand{\MR}{\relax\ifhmode\unskip\space\fi MR }
% \MRhref is called by the amsart/book/proc definition of \MR.
\providecommand{\MRhref}[2]{%
  \href{http://www.ams.org/mathscinet-getitem?mr=#1}{#2}
}
\providecommand{\href}[2]{#2}
\begin{thebibliography}{10}

\bibitem{Aharoni2018}
R.~Aharoni, Y.~Censor, and Z.~Jiang, \emph{Finding a best approximation pair of
  points for two polyhedra}, Computational Optimization and Applications
  \textbf{71} (2018), 509--523.

\bibitem{Albert1998}
Ya.I. Alber, A.N. Iusem, and M.V. Solodov, \emph{On the projected subgradient
  method for nonsmooth convex optimization in a {H}ilbert space}, Mathematical
  Programming \textbf{81} (1998), 23--35.

\bibitem{Aoyama2014}
K.~Aoyama and F.~Kohsaka, \emph{Viscosity approximation process for a sequence
  of quasi-nonexpansive mappings}, Fixed Point Theory Appl. \textbf{17} (2014),
  1--11.

\bibitem{Bargetz2018}
C.~Bargetz, S.~Reich, and R.~Zalas, \emph{Convergence properties of dynamic
  string averaging projection methods in the presence of perturbations},
  Numerical Algorithms \textbf{77} (2018), 185--209.

\bibitem{Bauschke96}
H.H. Bauschke and J.M. Borwein, \emph{{On projection algorithms for solving
  convex feasibility problems}}, SIAM Review \textbf{38} (1996), 367--426.

\bibitem{BauschkeBS1997}
H.H. Bauschke, J.M. Borwein, and A.S. Lewis, \emph{The method of cyclic
  projections for closed convex sets in {H}ilbert space}, Recent developments
  in optimization theory and nonlinear analysis : AMS/IMU Special Session on
  Optimization and Nonlinear Analysis, May 24-26, 1995, Jerusalem, Israel
  (Y.~Censor and S.~Reich, eds.), Providence, R.I. : American Mathematical
  Society, 1997, pp.~1--38.

\bibitem{BC11}
H.H. Bauschke and P.L. Combettes, \emph{{Convex Analysis and Monotone Operator
  Theory in {H}ilbert Spaces}}, Second edition, Springer, New York, NY, USA,
  2017.

\bibitem{Brooke-2019}
M.~Brooke, Y.~Censor, and A.~Gibali, \emph{Dynamic string-averaging cq-methods
  for the split feasibility problem with percentage violation constraints
  arising in radiation therapy treatment planning}, Tech. report,
  https://arxiv.org/abs/1911.12041., 2019.

\bibitem{TV2011}
V.~Caselles, A.~Chambolle, and M.~Novaga, \emph{Total variation in imaging},
  Handbook of Mathematical Methods in Imaging (O.~Scherzer, ed.), Springer, New
  York, NY, USA, 2011, pp.~1016--1057.

\bibitem{Cegielski2012Book}
A.~Cegielski, \emph{Iterative methods for fixed point problems in {H}ilbert
  spaces}, Springer-Verlag, Berlin, Heidelberg, Germany, 2012.

\bibitem{Cegielski2015}
\bysame, \emph{Application of quasi-nonexpansive operators to an iterative
  method for variational inequality}, SIAM Journal on Optimization \textbf{25}
  (2015), 2165--2181.

\bibitem{Censor2008}
Y.~Censor, A.~Ben-Israel, Y.~Xiao, and J.M. Galvin, \emph{On linear
  infeasibility arising in intensity-modulated radiation therapy inverse
  planning}, Linear Algebra and Its Applications \textbf{428} (2008),
  1406--14202.

\bibitem{ceh01}
Y.~Censor, T.~Elfving, and G.T. Herman, \emph{Averaging strings of sequential
  iterations for convex feasibility problems}, Inherently Parallel Algorithms
  in Feasibility and Optimization and Their Applications (D.~Butnariu,
  Y.~Censor, and S.~Reich, eds.), Elsevier Science Publishers, Amsterdam, The
  Netherlands, 2001, pp.~101--114.

\bibitem{Censor2019}
Y.~Censor, E.~Gardu{\~n}o, E.S. Helou, and G.T. Herman, \emph{Derivative-free
  superiorization: Principle and algorithm}, Numerical Algorithms (2020,
  accepted for publication. https://arxiv.org/abs/1908.10100.).

\bibitem{CZ2018}
Y.~Censor and M.~Zaknoon, \emph{Algorithms and convergence results of
  projection methods for inconsistent feasibility problems: A review}, Pure and
  Applied Functional Analysis \textbf{3} (2018), 565--586.

\bibitem{Censor2014}
Y.~Censor and A.J. Zaslavski, \emph{String-averaging projected subgradient
  methods for constrained minimization}, Optimization Methods and Software
  \textbf{29} (2014), 658--670.

\bibitem{Cheney1959}
W.~Cheney and A.A. Goldstein, \emph{Proximity maps for convex sets}, Proceeding
  of the American Mathematical Society \textbf{10} (1959), 448--450.

\bibitem{Combettes1993}
P.~L. Combettes, \emph{The foundations of set theoretic estimation},
  Proceedings of the IEEE \textbf{81} (1993), 182--208.

\bibitem{Combettes1994}
\bysame, \emph{Inconsistent signal feasibility problems: Least-squares
  solutions in a product space}, IEEE Transactions on Signal Processing
  \textbf{42} (1994), 2955--2966.

\bibitem{Combettes1999}
P.~L. Combettes and P.~Bondon, \emph{Hard-constrained inconsistent signal
  feasibility problems}, IEEE Transactions on Signal Processing \textbf{47}
  (1999), 2460--2468.

\bibitem{Cruz2017}
J.Y.~Bello Cruz, \emph{On proximal subgradient splitting method for minimizing
  the sum of two nonsmooth convex functions}, Set-Valued Var. Anal. \textbf{25}
  (2017), 245--263.

\bibitem{DCSGX2015}
R.~Davidi, Y.~Censor, R.W. Schulte, S.~Geneser, and L.~Xing,
  \emph{Feasibility-seeking and superiorization algorithms applied to inverse
  treatment planning in radiation therapy}, Contemporary Mathematics
  \textbf{636} (2015), 83--92.

\bibitem{Deutsch2001}
F.~Deutsch, \emph{Best approximation in inner product spaces}, Springer, New
  York, NY, USA, 2001.

\bibitem{Deutsch1998}
F.~Deutsch and I.~Yamada, \emph{Minimizing certain convex functions over the
  intersection of the fixed point sets of nonexpansive mappings}, Numerical
  Functional Analysis and Optimization \textbf{19} (1998), 33--56.

\bibitem{opt-stop-book-2008}
T.S. Ferguson, \emph{Optimal stopping and applications}, Universiy of
  California, Los Angeles (UCLA),
  https://www.e-booksdirectory.com/details.php?ebook=5651, 2008.

\bibitem{Hayashi2018}
Y.~Hayashi and H.~Iiduka, \emph{Optimality and convergence for convex ensemble
  learning with sparsity and diversity based on fixed point optimization},
  Neurocomputing \textbf{273} (2018), 367--372.

\bibitem{Hicks2006}
B.~J. Hicks, A.~J. Medland, and G.~Mullineux, \emph{The representation and
  handling of constraints for the design, analysis and optimization of high
  speed machinery}, Artificial Intelligence for Engineering Design, Analysis
  and Manufacture (AIEDAM) \textbf{20} (2006), 313--328.

\bibitem{Hirstoaga2006}
S.A. Hirstoaga, \emph{Iterative selection methods for common fixed point
  problems}, J. Math. Anal. Appl \textbf{324} (2006), 1020--1035.

\bibitem{Iiduka2012}
H.~Iiduka, \emph{Fixed point optimization algorithm and its application to
  network bandwidth allocation}, Journal of Computational and Applied
  Mathematics \textbf{236} (2012), 1733--1742.

\bibitem{Iiduka2015a}
\bysame, \emph{Acceleration method for convex optimization over the fixed point
  set of a nonexpansive mapping}, Mathematical Programming \textbf{149} (2015),
  131--165.

\bibitem{Iiduk2016}
\bysame, \emph{Convergence analysis of iterative methods for nonsmooth convex
  optimization over fixed point sets of quasi-nonexpansive mappings},
  Mathematical Programming \textbf{159} (2016), 509--538.

\bibitem{Kong2019}
T.Y. Kong, H.~Pajoohesh, and G.T. Herman, \emph{String-averaging algorithms for
  convex feasibility with infinitely many sets}, Inverse Problems \textbf{35}
  (2019), 034001.

\bibitem{Mainge2008}
P.-E. Maing{\'e}, \emph{Convex minimization over the fixed point set of
  demicontractive mappings}, Positivity \textbf{12} (2008), 269--280.

\bibitem{Mainge2008a}
\bysame, \emph{Strong convergence of projected subgradient methods for
  nonsmooth and nonstrictly convex minimization}, Set-Valued Anal \textbf{16}
  (2008), 899--912.

\bibitem{Martinez-Yanes2006}
C.~Martinez-Yanes and H-K. Xu, \emph{Strong convergence of the cq method for
  fixed point iteration processes}, Nonlinear Analysis \textbf{64} (2006),
  2400--2411.

\bibitem{Reich2014}
S.~Reich and A.J. Zaslavski, \emph{Genericity in nolinear analysis},
  Springer-Verlag, New York, 2014.

\bibitem{Shor1985}
N.~Z. Shor, \emph{Minimization methods for non-differentiable functions},
  Springer Series in Computational Mathematics, Springer, Berlin, 1985.

\bibitem{Yamada2001a}
I.~Yamada, \emph{The hybrid steepest descent method for the variational
  inequality problem over the intersection of fixed point sets of nonexpansive
  mappings}, Studies in Computational Mathematics \textbf{8} (2001), 473--504.

\bibitem{yamada2005hybrid}
I.~Yamada and N.~Ogura, \emph{Hybrid steepest descent method for variational
  inequality problem over the fixed point set of certain quasi-nonexpansive
  mappings}, Numer. Funct. Anal. Optim. \textbf{25} (2004), 619--655.

\bibitem{Zaslavskibook2018}
A.~J. Zaslavski, \emph{Algorithms for solving common fixed point problems},
  Springer International Publishing, 2018.

\end{thebibliography}
\end{document}